\newcommand{\ve}{{\bf e}}
\begin{document}
	\large
	
	\title{Eigenvalues {and Jordan Forms} of Dual Complex Matrices}%\footnote{This work was partially supported by Hong Kong Innovation and Technology Commission (InnoHK Project CIMDA).}}
	\author{ Liqun Qi\footnote{Department of Mathematics, School of Science, Hangzhou Dianzi University, Hangzhou 310018 China; Department of Applied Mathematics, The Hong Kong Polytechnic University, Hung Hom, Kowloon, Hong Kong
			({\tt maqilq@polyu.edu.hk}).}
		%\and \
		%Xiangke Wang\thanks{College of Mechatronics and Automation, National University of Defence Technology, Changsha, 410073, China ({\tt
		%xkwang@nudt.edu.cn}).}
		\and \
		Chunfeng Cui\footnote{LMIB of the Ministry of Education, School of Mathematical Sciences, Beihang University, Beijing 100191 China.
			({\tt chungfengcui@buaa.edu.cn}).}
		%This author's work was supported by %Natural Science Foundation of China (No. ) and .
		%}
		%\and and \
		%Ziyan Luo\footnote{Corresponding author, Department of Mathematics,
		%Beijing Jiaotong University, Beijing 100044, China. ({\tt zyluo@bjtu.edu.cn}). This author's work was supported by Beijing Natural Science Foundation (Grant No. Z190002).}
	}
	\date{\today}
	\maketitle

	\begin{abstract}
		Dual complex matrices have found applications in brain science.   There are two different definitions of the dual complex number multiplication.  One is noncommutative.  Another is commutative.   In this paper, we use the commutative definition.  This definition is used in the research related with brain science.
		Under this definition, eigenvalues of dual complex matrices are defined.   However, there are cases of dual complex matrices which have no eigenvalues or have infinitely many eigenvalues.   We show that an $n \times n$ dual complex matrix is diagonalizable if and only if it has exactly $n$ eigenvalues with $n$ appreciably linearly independent eigenvectors.   Hermitian dual complex matrices are diagonalizable.   We present the Jordan form of a dual complex matrix with a diagonalizable standard part, and the Jordan form of a dual complex matrix with a Jordan block standard part.   Based on these, we %present a necessary and sufficient condition such that a square dual complex matrix has no eigenvalues at all, and
		give a description of the eigenvalues of a general square dual complex matrix.

		\medskip

		% \medskip

		\textbf{Key words.} Dual complex numbers, matrices, eigenvalues, diagonalization, Jordan form.
		
		%\medskip
		% \textbf{AMS subject classifications. }
	\end{abstract}

	\renewcommand{\Re}{\mathds{R}}
	\newcommand{\rank}{\mathrm{rank}}
	\renewcommand{\span}{\mathrm{span}}
	\newcommand{\X}{\mathcal{X}}
	\newcommand{\A}{\mathcal{A}}
	\newcommand{\I}{\mathcal{I}}
	\newcommand{\B}{\mathcal{B}}
	\newcommand{\C}{\mathcal{C}}
	\newcommand{\OO}{\mathcal{O}}
	\newcommand{\e}{\mathbf{e}}
	\newcommand{\0}{\mathbf{0}}
	\newcommand{\dd}{\mathbf{d}}
	\newcommand{\ii}{\mathbf{i}}
	\newcommand{\jj}{\mathbf{j}}
	\newcommand{\kk}{\mathbf{k}}
	\newcommand{\va}{\mathbf{a}}
	\newcommand{\vb}{\mathbf{b}}
	\newcommand{\vc}{\mathbf{c}}
	\newcommand{\vq}{\mathbf{q}}
	\newcommand{\vg}{\mathbf{g}}
	\newcommand{\pr}{\vec{r}}
	\newcommand{\ps}{\vec{s}}
	\newcommand{\pt}{\vec{t}}
	\newcommand{\pu}{\vec{u}}
	\newcommand{\pv}{\vec{v}}
	\newcommand{\pw}{\vec{w}}
	\newcommand{\pp}{\vec{p}}
	\newcommand{\pq}{\vec{q}}
	\newcommand{\pl}{\vec{l}}
	\newcommand{\vt}{\rm{vec}}
	\newcommand{\vx}{\mathbf{x}}
	\newcommand{\vy}{\mathbf{y}}
	\newcommand{\vu}{\mathbf{u}}
	\newcommand{\vv}{\mathbf{v}}
	\newcommand{\y}{\mathbf{y}}
	\newcommand{\vz}{\mathbf{z}}
	\newcommand{\T}{\top}
	
	\newtheorem{Thm}{Theorem}[section]
	\newtheorem{Def}[Thm]{Definition}
	\newtheorem{Ass}[Thm]{Assumption}
	\newtheorem{Lem}[Thm]{Lemma}
	\newtheorem{Prop}[Thm]{Proposition}
	\newtheorem{Cor}[Thm]{Corollary}
	\newtheorem{example}[Thm]{Example}
	\newtheorem{remark}[Thm]{Remark}
	
	\section{Introduction}
	
	Dual complex matrices and their singular value decomposition theory have found application in brain science \cite{QACLL22, WDW23}.    There are two approaches to consider the eigenvalue theory of general dual complex matrices.  One approach is contained in \cite{QL21}.  In this approach, dual complex multiplication is defined as noncommutative.   The motivation for this is to represent rigid body motion in the plane \cite{MKO14}.   Following this approach, the eigenvalue theory of dual complex matrices is somewhat complicated.  As the multiplication is noncommutative, one only can define right and left eigenvalues as the quaternion matrix case \cite{Ji19, WLZZ18, Zh97}.   %Then Example 2 of \cite{QL21} show that a dual complex matrix may have no right eigenvalue at all.
	We do not use this approach here.   Another approach is to consider dual complex numbers as a special of dual quaternion numbers.   This approach is contained in \cite{QACLL22} and \cite{WDW23}. This approach has found applications in brain science.   We follow this approach here.   Eigenvalues of dual complex matrices were introduced in  \cite{QACLL22}.  Then it turned to discuss eigenvalues of dual complex Hermitian matrices, but there was no further discussion on eigenvalues of general dual complex square matrices in \cite{QACLL22}.   We now follow this approach here.  %Then Example 2 of \cite{QL21} show that a dual complex matrix may have no right eigenvalue at all.
	
	It turns out that the eigenvalue theory of dual complex matrices is nontrivial.   In Section 3, we will give examples that a dual complex matrix has no eigenvalues at all, and another dual complex matrix has infinitely many eigenvalues.  This {shows} that the eigenvalue theory of dual complex matrices is not {a simple duplicate of} the eigenvalue theory of complex matrices \cite{HJ12}, where such things never happen.
	
	Fortunately, such situations do not always happen for dual complex matrices. Dual complex matrices are the special cases of dual quaternion matrices.   By Qi and Luo \cite{QL23}, an $n \times n$ dual complex Hermitian matrix has exactly $n$ dual number eigenvalues.   This matrix is positive semidefinite (definite) if and only if these $n$ dual number eigenvalues are nonnegative (positive), in the sense of \cite{QLY22}.
	
	In fact, we can show that a diagonalizable $n \times n$ dual complex matrix has exactly $n$ eigenvalues.
	
	All of these indicate that we need to fully characterize the situation of eigenvalues of dual complex matrices: under which conditions an $n \times n$  dual complex matrix has exactly $n$ eigenvalues, no eigenvalue at all, and infinitely many eigenvalues.
	
	This study also paves a way for further studying the eigenvalue theory of dual quaternion matrices \cite{LHQ22, LHQF23, LQY23, QL23}, which has applications in the multi-agent formation control \cite{QWL23} and simultaneous location and mapping \cite{CQ23}.
	
	In Section 2, we review some basic knowledge on dual complex numbers and dual complex vectors.
	
	In Section 3, we present examples that that a dual complex matrix has no eigenvalues at all, and another dual complex matrix has infinitely many eigenvalues.   A dual complex matrix has an eigenvalue with an eigenvector only if the standard part of that eigenvalue, and the standard part of that eigenvector are the eigenvalue and respective eigenvector of the standard part of that dual complex matrix.  Then in that situation we give conditions under which such an eigenvalue and an eigenvector of that dual complex matrix exist, and conditions under which such an eigenvalue and an eigenvector are unique.
	
	In Section 4, we show that an $n \times n$ dual complex matrix is diagonalizable if and only if it has exactly $n$ eigenvalues with $n$ appreciably linearly independent eigenvectors.
	
	{In Section 5, we} present the Jordan form of a dual complex matrix with a diagonalizable standard part, {give another necessary and sufficient condition for an $n \times n$ dual complex matrix to be diagonalizable, and use two more examples to show that under the necessary condition that the standard part of an $n \times n$ dual complex matrix is diagonalizable, the condition that its dual part is diagonalizable is neither necessary nor sufficient for that dual complex matrix to be diagonalizable.}
	
	{Then we present} the Jordan form of a dual complex matrix with a Jordan block standard part in Section 6.
	
	In Section 7, we %present a necessary and sufficient condition such that a square dual complex matrix has no eigenvalues at all, and
	give a description of the eigenvalues of a general square dual complex matrix.
	
	Some further discussion is conducted in  Section 8.

	\section{Dual Complex Numbers}
	
	The field of real numbers, the field of complex numbers, the set of dual numbers, and the set of dual complex numbers are denoted  by $\mathbb R$, $\mathbb C$, $\mathbb D$ and $\mathbb {DC}$, respectively.
	
	A {\bf dual complex number} $a = a_s + a_d\epsilon$ has standard part $a_s$ and dual part $a_d$.   Both $a_s$ and $a_d$ are complex numbers.   The symbol $\epsilon$ is the infinitesimal unit, satisfying $\epsilon^2 = 0$, and $\epsilon$ is commutative with complex numbers.  If $a_s \not = 0$, then we say that $a$ is {\bf appreciable}.    If $a_s$ and $a_d$ are real numbers, then $a$ is called a {\bf dual number}.
	
	The conjugate of $a = a_s + a_d\epsilon$ is $a^* = a_s^* + a_d^*\epsilon$.
	
	Suppose we have two dual complex numbers $a = a_s + a_d\epsilon$ and $b = b_s + b_d\epsilon$.   Then their sum is $a+b = (a_s+b_s) + (a_d+b_d)\epsilon$, and their product is $ab = a_sb_s + (a_sb_d+a_db_s)\epsilon$.
	The multiplication of dual complex numbers is commutative.
	
	Suppose we have two dual numbers $a = a_s + a_d\epsilon$ and $b = b_s + b_d\epsilon$.   By \cite{QLY22}, if $a_s > b_s$, or $a_s = b_s$ and $a_d > b_d$, then we say $a > b$.   Then this defines positive, nonnegative dual numbers, etc. Some applications of this total order of dual numbers can be founded in \cite{CQ23, LHQ22, LHQF23, LQY23, QL23, QWL23, WCW23}.
	
	In particular, for a dual complex number $a = a_s + a_d\epsilon$, its magnitude is defined as a nonnegative dual number $|a| = |a_s| + |a_d|\epsilon$.  {If a dual number $a = a_s + a_d\epsilon$ is nonnegative and appreciable, then the square root of $a$ is still a nonnegative dual number.   If $a$ is positive and appreciable, we have
		%\begin{equation} \label{e0}
		$$\sqrt{a} = \sqrt{a_s} + {a_d \over 2\sqrt{a_s}}\epsilon.$$
		%\end{equation}
		When $a=0$, we have $\sqrt{a} = 0$.}
	
	A dual complex number vector is denoted by $\vx = (x_1, \cdots, x_n)^\top \in {\mathbb {DC}}^n$.  Its $2$-norm is defined as
	$$\|\vx\|_2 = \left\{\begin{array}{ll}
		\sqrt{\sum_{i=1}^n |x_i|^2}, & \ \mathrm{if} \  \vx_s \not = \0,\\
		\|\vx_d\|_2\epsilon, & \ \mathrm{if} \  \vx_s  = \0.
	\end{array}\right.$$
	We may denote $\vx = \vx_s + \vx_d\epsilon$, where $\vx_s, \vx_d \in {\mathbb C}^n$.   If $\vx_s \not = \0$, then we say that $\vx$ is appreciable.   The unit vectors in ${\mathbb R}^n$ are denoted as $\ve_1, \cdots, \ve_n$.   They are also unit vectors of ${\mathbb {DC}}^n$.
	
	For $\vx = (x_1, \cdots, x_n)^\top \in {\mathbb {DC}}^n$, denote $\vx^* = (x_1^*, \cdots, x_n^*)$.  Let $\vy = (y_1, \cdots, y_n)^\top \in {\mathbb {DC}}^n$.   Define
	$$\vx^*\vy = \sum_{j=1}^n x_j^*y_j.$$
	If $\vx^*\vy = 0$, we say that $\vx$ and $\vy$ are {\bf orthogonal}.  Note that $\vx^*\vx = \|\vx\|_2^2$.
	Let $\vx_1, \cdots, \vx_n \in {\mathbb {DC}}^n$.   If $\vx_i^*\vx_j = 0$ for $i \not = j$ and $\vx_i^*\vx_j = 1$ for $i = j$, $i, j = 1, \cdots, n$, then we say that $\vx_1, \cdots, \vx_n$ form an orthonormal basis of ${\mathbb {DC}}^n$.   %One may check that the Gram-Schmidt orthonormalization procedure \cite{HJ12} is still valid for dual complex vectors.

	\bigskip

	\section{Eigenvalues of Dual Complex Matrices}
	
	In this section, we always assume
	that $A = A_s + A_d\epsilon$ and $B = B_s + B_d\epsilon$ are two $n \times n$ dual complex matrices, where $n$ is a positive integer, $A_s, A_d, B_s$ and $B_d$ are four complex matrices, assume that $\lambda = \lambda_s + \lambda_d\epsilon$ and $\mu = \mu_s + \mu_d \epsilon$ are two dual complex numbers with $\lambda_s, \lambda_d, \mu_s$ and $\mu_d$ being complex numbers, and assume that
	$\vx = \vx_s + \vx_d \epsilon$ and $\vy = \vy_s + \vy_d \epsilon$ are two $n$-dimensional dual complex vectors with $\vx_s, \vx_d, \vy_s$ and $\vy_d$ being $n$-dimensional {complex vectors}.
	
	If $AB = BA = I$, where
	$I$ is the $n \times n$ identity matrices, then we say that $B$ is the {\bf inverse} of $A$ and denote that $B = A^{-1}$.  The following proposition can be proved directly by definition.
	
	\begin{Prop} \label{p3.1}
		Suppose that $A = A_s + A_d\epsilon$ and $B = B_s + B_d\epsilon$ are two $n \times n$ dual complex matrices, where $n$ is a positive integer, $A_s, A_d, B_s$ and $B_d$ are four complex matrices.   Then the following four statements are equivalent.
		
		(a) $B = A^{-1}$;
		
		(b) $AB = I$;

		(c) $A_sB_s = I$ and $A_sB_d+A_dB_s=O$;
		
		%(c) $B_s = A_s^{-1}$ and $A_sB_d+A_dB_s=O$.
		
		(d) $B_s = A_s^{-1}$ and $B_d= -A_s^{-1}A_dA_s^{-1}$.
	\end{Prop}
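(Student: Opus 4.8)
The plan is to prove the four statements equivalent by establishing the cycle (a) $\Rightarrow$ (b) $\Rightarrow$ (c) $\Rightarrow$ (d) $\Rightarrow$ (a), arranging the order so that each implication can reuse the concrete information produced by the previous one. The first implication (a) $\Rightarrow$ (b) is immediate from the definition of the inverse: $B = A^{-1}$ means precisely $AB = BA = I$, so in particular $AB = I$.

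For (b) $\Rightarrow$ (c), the key tool is the multiplication rule for dual complex matrices recorded in Section 2, namely $AB = A_sB_s + (A_sB_d + A_dB_s)\epsilon$. Writing the identity as $I = I + O\epsilon$ and comparing standard and dual parts yields exactly $A_sB_s = I$ and $A_sB_d + A_dB_s = O$. This matching step is legitimate because $\epsilon^2 = 0$ makes the standard and dual components of a dual complex matrix independent, so equality of dual complex matrices is equivalent to equality of both components.

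For (c) $\Rightarrow$ (d), I would invoke the classical fact that for square complex matrices $A_sB_s = I$ forces $A_s$ to be invertible with $B_s = A_s^{-1}$. Substituting $B_s = A_s^{-1}$ into $A_sB_d + A_dB_s = O$ and left-multiplying by $A_s^{-1}$ then gives $B_d = -A_s^{-1}A_dA_s^{-1}$, which is the content of (d).

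Finally, for (d) $\Rightarrow$ (a), I would substitute the explicit formulas $B_s = A_s^{-1}$ and $B_d = -A_s^{-1}A_dA_s^{-1}$ directly into both $AB$ and $BA$ and check each equals $I$; the standard parts reduce to $A_s^{-1}A_s = A_sA_s^{-1} = I$, and in each dual part the two terms cancel. The point worth flagging is that the proposition is really asserting that a one-sided inverse is automatically two-sided, even though $\mathbb{DC}$ is only a commutative ring with zero divisors rather than a field. The main obstacle is precisely this passage from $AB = I$ to $BA = I$, and the reason it goes through is that the obstruction is pushed down to the standard part, where we may use the corresponding two-sided property of complex matrix inverses; the explicit formulas obtained in (d) then let us verify $BA = I$ by a direct cancellation rather than by any abstract ring-theoretic argument.
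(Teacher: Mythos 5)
Your proof is correct and is exactly the ``direct by definition'' verification that the paper asserts but does not write out: comparing standard and dual parts of the product, using the complex-matrix fact that a one-sided inverse of a square matrix is two-sided, and then checking $BA=I$ from the explicit formulas in (d). Your closing remark correctly identifies why the one-sided-to-two-sided passage works over $\mathbb{DC}$ despite it being a ring with zero divisors; nothing is missing.
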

	
	If there is an $n \times n$ invertible dual complex matrix $P$ such that $A = P^{-1}BP$, then we say that $A$ and $B$ are {\bf similar}, and denote $A \sim B$.
	
	For an $n \times n$ dual complex matrix $A$, denote its conjugate transpose as $A^*$.   If $A^* = A^{-1}$, then $A$ is called a {\bf unitary} matrix.  If a dual number matrix is a unitary matrix, then we simply call it an {\bf orthogonal matrix}.
	
	If
	\begin{equation} \label{en1}
		A\vx = \lambda\vx,
	\end{equation}
	where $\vx$ is appreciable, i.e., $\vx_s \not = \0$, then $\lambda$ is called an eigenvalue of $A$, with an eigenvector $\vx$.
	
	By definition, if $A \sim B$, then $A$ and $B$ have the same eigenvalue set.  In fact, we have the following proposition.
	
	\begin{Prop} \label{p3.2}
		Suppose that $A, B \in {\mathbb {DC}}^{n \times n}$, $A \sim B$, i.e., $A = P^{-1}BP$ for some invertible matrix $P \in {\mathbb {DC}}^{n \times n}$, and $\lambda \in {\mathbb {DC}}$ is an eigenvalue of $A$ with an eigenvector $\vx \in {\mathbb {DC}}^n$.   Then $\lambda$ is an eigenvalue of $B$ with an eigenvector $P\vx$.
	\end{Prop}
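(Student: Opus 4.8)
The plan is to work directly from the definitions, treating the proof as essentially the familiar field-theoretic argument but with one extra check forced by the dual structure. Since $\lambda$ is an eigenvalue of $A$ with eigenvector $\vx$, we have $A\vx = \lambda\vx$ with $\vx$ appreciable, i.e. $\vx_s \neq \0$. Substituting $A = P^{-1}BP$ gives $P^{-1}BP\vx = \lambda\vx$. Left-multiplying both sides by $P$ and using that the scalar $\lambda$ commutes with the matrix $P$ — both the complex scalars and the infinitesimal unit $\epsilon$ commute with everything in sight, so scalar multiplication commutes past matrix multiplication — I obtain $B(P\vx) = \lambda(P\vx)$. This is exactly the eigenvalue equation for $B$ with candidate eigenvector $P\vx$.

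The only point that requires genuine care, and the step I would flag as the real content of the proof, is verifying that $P\vx$ is an admissible eigenvector, i.e. that it is appreciable. By the definition in this section, appreciability of $P\vx$ means its standard part is nonzero. Writing $P = P_s + P_d\epsilon$ and $\vx = \vx_s + \vx_d\epsilon$, the product rule for dual complex quantities gives $(P\vx)_s = P_s\vx_s$. So the claim reduces to showing $P_s\vx_s \neq \0$.

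To establish this I would invoke Proposition \ref{p3.1}: since $P$ is invertible, part (d) applied to $P^{-1}$ shows that its standard part equals $P_s^{-1}$, which in particular forces $P_s$ to be an invertible complex matrix. Invertibility of $P_s$ together with $\vx_s \neq \0$ then yields $P_s\vx_s \neq \0$. Hence $P\vx$ is appreciable, and combined with the equation $B(P\vx) = \lambda(P\vx)$ derived above, this shows $\lambda$ is an eigenvalue of $B$ with eigenvector $P\vx$, completing the argument. I do not anticipate any serious obstacle: the appreciability check is the only place where the dual structure — as opposed to the purely formal algebra of similar matrices over a field — actually enters, and it is handled immediately once the invertibility of $P_s$ is extracted from Proposition \ref{p3.1}.
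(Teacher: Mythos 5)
Your proposal is correct and follows exactly the route of the paper's own (very terse) proof: derive $B(P\vx)=\lambda(P\vx)$ by conjugation and then use Proposition \ref{p3.1} to see that $P_s$ is invertible, so $(P\vx)_s = P_s\vx_s \neq \0$ and $P\vx$ is appreciable. You have simply spelled out the details the paper leaves implicit.
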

	\begin{proof}
		By Proposition \ref{p3.1}, we may show that $P\vx$ is appreciable.  The conclusion now follows directly.
	\end{proof}
	
	Since $A = A_s + A_d\epsilon$, $\lambda = \lambda_s + \lambda_d \epsilon$ and $\vx = \vx_s + \vx_d \epsilon$,
	(\ref{en1}) is equivalent to
	\begin{equation} \label{en2}
		A_s\vx_s = \lambda_s\vx_s,
	\end{equation}
	with $\vx_s \not = \0$, i.e., $\lambda_s$ is an eigenvalue of $A_s$ with an eigenvector $\vx_s$, and
	\begin{equation} \label{en3}
		(A_s-\lambda_sI)\vx_d - \lambda_d\vx_s = -A_d\vx_s.
	\end{equation}

	\medskip
	
	By \cite{QL23}, we know that if $A$ is Hermitian, then it has exactly $n$ dual number eigenvalues, with orthonormal
	eigenvectors, and $A$ is positive semidefinite (definite) if and only if its eigenvalues are nonnegative (positive).
	
	\medskip
	
	{\bf Example 1} - a dual number matrix $A$, which has no eigenvalue at all.
	
	Let $A = A_s+A_d\epsilon$, where
	\begin{equation}
		A_s = \left(\begin{array}{cc}
			1 & 1 \\
			0 & 1
		\end{array} \right)
		\text{ and }
		A_d = \left(\begin{array}{cc}
			0 & 0 \\
			1 & 0
		\end{array} \right).
	\end{equation}
	Then all possible eigenpairs of $A_s$ are: $\lambda_s=1$, $\vx_s=[\alpha,0]^\top$, where $\alpha \not = 0$. Then (\ref{en3}) is equivalent to
	\begin{equation}
		\left(\begin{array}{c}
			x_{d,2}\\
			0
		\end{array} \right) =\alpha \left(\begin{array}{c}
			\lambda_d\\
			-1
		\end{array} \right),
	\end{equation}
	where $\alpha \not = 0$.
	Hence, (\ref{en3}) has no solution for all possible eigenpairs of $A_s$, i.e., $A$ has no eigenvalue at all.
	
	\medskip
	
	{\bf Example 2}  - a dual number matrix $A$, which has infinitely many eigenvalues.

	Let $A = A_s+A_d\epsilon$, where
	\begin{equation}
		A_s = \left(\begin{array}{cc}
			1 & 1 \\
			0 & 1
		\end{array} \right)
		\text{ and }
		A_d = \left(\begin{array}{cc}
			1 & 0 \\
			0 & 0
		\end{array} \right).
	\end{equation}
	$A_s$ has an eigenpair: $\lambda_s=1$, $\vx_s=[1,0]^\top$. Then (\ref{en3}) is equivalent to
	\begin{equation}
		\left(\begin{array}{c}
			x_{d,2}\\
			0
		\end{array} \right) = \left(\begin{array}{c}
			\lambda_d-1\\
			0
		\end{array} \right).
	\end{equation}
	Hence, $\lambda_d= x_{d,2}+1$. As $x_{d,2}$ can be any complex number even if we require $\vx$ to have unit 2-norm, $\lambda_d$ can be any complex number.  Thus, $A$ has infinitely many eigenvalues.
	
	\begin{Prop} \label{p3.3}
		Suppose that $A$ is an $n \times n$ dual complex diagonalizable matrix, i.e., $A$ is similar to a dual complex diagonal matrix.   Then $A$ has exactly $n$ eigenvalues.
	\end{Prop}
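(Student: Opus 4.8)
The plan is to reduce to the case of a diagonal matrix and then determine its eigenvalue set explicitly. Since $A$ is diagonalizable, I would write $A = P^{-1}DP$ with $P$ invertible and $D = \mathrm{diag}(\lambda_1, \ldots, \lambda_n)$ a dual complex diagonal matrix, where $\lambda_i = \lambda_{i,s} + \lambda_{i,d}\epsilon$. By Proposition \ref{p3.2} and the symmetry of the similarity relation, $A$ and $D$ share the same eigenvalue set, so it suffices to show that the eigenvalues of $D$ are exactly its $n$ diagonal entries $\lambda_1, \ldots, \lambda_n$ (counted with multiplicity); transferring the corresponding eigenvectors $\ve_i$ by $P^{-1}$ then exhibits the $n$ eigenpairs of $A$.

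First I would verify the easy inclusion: each $\lambda_i$ is an eigenvalue of $D$. Indeed, the real unit vector $\ve_i$ satisfies $D\ve_i = \lambda_i \ve_i$, and $\ve_i$ is appreciable since its standard part is $\ve_i \neq \0$. Hence every diagonal entry is an eigenvalue, giving at least $n$ eigenvalues.

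The substantive part is to show there are no others. Suppose $\mu = \mu_s + \mu_d\epsilon$ is any eigenvalue of $D$ with appreciable eigenvector $\vx = \vx_s + \vx_d\epsilon$, so $\vx_s \neq \0$. Componentwise, $D\vx = \mu\vx$ reads $(\lambda_j - \mu)x_j = 0$ for each $j$. Separating standard and dual parts, the standard-part equations are $(\lambda_{j,s} - \mu_s)x_{j,s} = 0$. Because $\vx_s \neq \0$, some $x_{j,s} \neq 0$, which forces $\mu_s = \lambda_{j,s}$; more generally, $x_{j,s} = 0$ for every index whose standard part $\lambda_{j,s}$ differs from $\mu_s$, so the support of $\vx_s$ lies entirely among the indices sharing the standard value $\mu_s$. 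For such an index $j$ in the support, the dual-part equation reduces to $(\lambda_{j,d} - \mu_d)x_{j,s} = 0$, forcing $\mu_d = \lambda_{j,d}$. Thus $\mu = \lambda_{j,s} + \lambda_{j,d}\epsilon = \lambda_j$ for some $j$, so $\mu$ is one of the diagonal entries.

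I expect the main obstacle to be exactly the handling of the dual-number zero divisors. Over $\mathbb{C}$ one would immediately conclude from $(\lambda_j - \mu)x_j = 0$ that $x_j = 0$ whenever $\lambda_j \neq \mu$; here $x_j$ may be a nonzero pure-dual number (with $x_{j,s} = 0$) that annihilates a nonzero factor, so such components carry no information and must be discarded. It is the appreciability hypothesis $\vx_s \neq \0$ that rescues the argument, guaranteeing a component with nonzero standard part that pins down first $\mu_s$ and then $\mu_d$. Extra care is needed when several diagonal entries share the same standard part $\mu_s$: the standard-part equations then only confine the support of $\vx_s$ to that block, and one must invoke the dual-part equations within the block to determine $\mu_d$ uniquely as one of the corresponding $\lambda_{j,d}$, thereby ruling out the kind of continuum of eigenvalues that can otherwise arise (cf. Example 2).
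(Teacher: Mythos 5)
Your proof is correct, and its skeleton matches the paper's: reduce to the diagonal matrix $D$ via Proposition \ref{p3.2} and the symmetry of similarity, then read off the eigenpairs $(\lambda_i, \ve_i)$. The difference is that the paper simply asserts ``by definition, $D$ has exactly $n$ eigenvalues $\lambda_1,\cdots,\lambda_n$,'' whereas you actually prove the nontrivial half of that claim, namely that $D$ has \emph{no other} eigenvalues. Given that the whole point of Section 3 is that dual complex matrices can have infinitely many eigenvalues (Example 2), this is not a step one should wave at: a priori one must rule out extra eigenvalues arising from the zero divisors of $\mathbb{DC}$. Your componentwise analysis does exactly that --- from $(\lambda_j-\mu)x_j=0$ the standard part gives $(\lambda_{j,s}-\mu_s)x_{j,s}=0$, the appreciability hypothesis $\vx_s\neq\0$ supplies an index with $x_{j,s}\neq 0$ forcing $\mu_s=\lambda_{j,s}$, and then the dual part collapses to $(\lambda_{j,d}-\mu_d)x_{j,s}=0$ forcing $\mu_d=\lambda_{j,d}$ --- and it correctly handles the case of repeated standard parts among the diagonal entries. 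So your write-up is a strictly more complete version of the paper's argument; the only cosmetic caveat is that ``exactly $n$ eigenvalues'' should be understood with multiplicity, as both you and the paper implicitly do.
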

	\begin{proof}
		Suppose that $A = PDP^{-1}$, where $D = {\rm diag}(\lambda_1, \cdots, \lambda_n)$, $\lambda_1, \cdots, \lambda_n$ are dual complex numbers.   By definition, $D$ has exactly $n$ eigenvalues $\lambda_1, \cdots, \lambda_n$, with eigenvectors $\ve_1, \cdots, \ve_n$.   By Proposition \ref{p3.2}, $A$ has exactly $n$ eigenvalues $\lambda_1, \cdots, \lambda_n$, with eigenvectors $P\ve_1, \cdots, P\ve_n$.
	\end{proof}
	
	Following the arguments of \cite{QL23}, we may show that an $n \times n$ dual complex Hermitian matrix $A$ is diagonalizable in the sense that $A = PDP^{-1}$, where $D$ is an $n \times n$ dual number diagonal matrix, while $P$ is a dual complex unitary matrix.
	
	Examples 1 and 2 display two situations very different from the situation displayed by Proposition \ref{p3.3}. Then, two questions naturally arise:  When do the situations displayed in Examples 1 and 2 happen?
	Are there any other situations?   We have the following theorem.
	
	\begin{Thm} \label{t3.4}
		Suppose that $A = A_s + A_d\epsilon \in {\mathbb {DC}}^{n \times n}$, $\lambda_s \in {\mathbb C}$ be an eigenvalue of $A_s$.  Then there exist $\lambda_d \in {\mathbb C}$ and $\vx_s, \vx_d \in {\mathbb C}^n$ such that $\lambda = \lambda_s + \lambda_d\epsilon$ is an eigenvalue of $A$ with an eigenvector $\vx = \vx_s + \vx_d\epsilon$, if and only if $\vx_s$ is an eigenvector of $A_s$, and
		\begin{equation} \label{en8}
			A_d\vx_s \in {\rm Span}(Q),
		\end{equation}
		where $Q = (A_s-\lambda_sI_n\ \ \vx_s)$, $I_n$ is the $n \times n$ identity matrix.   If furthermore we have
		\begin{equation} \label{en9}
			\vx_s \not \in {\rm Span}(A_s-\lambda_sI_n),
		\end{equation}
		then $\lambda_d$, hence $\lambda$,  is unique with such $\vx_s$.   Otherwise, $\lambda_d$ can be any complex number.
	\end{Thm}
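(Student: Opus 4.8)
The plan is to read off everything from the two conditions (\ref{en2}) and (\ref{en3}), which were already shown equivalent to the eigenvalue equation (\ref{en1}). Since an eigenvector must be appreciable we have $\vx_s \not= \0$, and (\ref{en2}) says precisely that $\vx_s$ is an eigenvector of $A_s$ with eigenvalue $\lambda_s$; this is the first asserted condition. What remains is to treat (\ref{en3}) as a linear system in the two unknowns $\vx_d$ and $\lambda_d$, and to extract both a solvability criterion and a uniqueness criterion for $\lambda_d$.

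The key reformulation is to collect the unknowns into a single vector $\vz = (\vx_d^\top, -\lambda_d)^\top \in {\mathbb C}^{n+1}$. With $Q = (A_s - \lambda_sI_n\ \ \vx_s)$ one checks directly that $Q\vz = (A_s - \lambda_sI_n)\vx_d - \lambda_d\vx_s$, so (\ref{en3}) becomes the single equation $Q\vz = -A_d\vx_s$. By the standard solvability criterion for a linear system, this admits a solution $\vz$ (equivalently, a pair $(\vx_d,\lambda_d)$) if and only if $-A_d\vx_s$ lies in the column space of $Q$; since that column space is a subspace, this is the same as $A_d\vx_s \in {\rm Span}(Q)$, which is (\ref{en8}). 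This settles the existence half.

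For uniqueness I would analyze $\mathrm{Null}(Q)$, since the solution set of $Q\vz = -A_d\vx_s$ is a coset of $\mathrm{Null}(Q)$ and we care only about whether the last coordinate $-\lambda_d$ of $\vz$ is forced. A vector $(\vu^\top, s)^\top$ lies in $\mathrm{Null}(Q)$ exactly when $(A_s - \lambda_sI_n)\vu = -s\vx_s$. If $\vx_s \not\in {\rm Span}(A_s-\lambda_sI_n)$, as in (\ref{en9}), then every null vector must have $s = 0$, so the last coordinate of $\vz$, hence $\lambda_d$ and thus $\lambda$, is the same for all solutions. Conversely, if (\ref{en9}) fails, choose $\vu_0$ with $(A_s-\lambda_sI_n)\vu_0 = \vx_s$; then $(\vu_0^\top, -1)^\top \in \mathrm{Null}(Q)$ has nonzero last coordinate, and adding $c$ times this vector to a particular solution shifts $\lambda_d$ by $c$, so $\lambda_d$ sweeps out all of ${\mathbb C}$.

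The one genuinely subtle point to get right is that uniqueness is asserted for $\lambda_d$ alone and not for the full solution $(\vx_d,\lambda_d)$: the vector $\vx_d$ is never unique, because $\mathrm{Null}(A_s-\lambda_sI_n)$ is nontrivial (as $\lambda_s$ is an eigenvalue of $A_s$), yet $\lambda_d$ may still be pinned down. Tracking this amounts to isolating the behavior of the last coordinate across $\mathrm{Null}(Q)$, and condition (\ref{en9}) is exactly what governs whether that coordinate can be nonzero, so the main work is bookkeeping rather than any deep difficulty.
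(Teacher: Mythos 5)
Your proposal is correct and follows exactly the paper's approach: rewriting (\ref{en3}) as the single linear system $Q\vz = -A_d\vx_s$ with $\vz = (\vx_d^\top, -\lambda_d)^\top$ and invoking the existence and uniqueness theory of linear systems. In fact you supply the details (the column-space criterion for existence and the null-space analysis showing condition (\ref{en9}) is exactly what pins down the last coordinate $-\lambda_d$) that the paper's one-line proof leaves implicit.
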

	\begin{proof}
		We may write (\ref{en3}) as
		\begin{equation} \label{en10}
			{Q\vz} = -A_d\vx_s,
		\end{equation}
		where
		$$\vz = \left({\vx_d \atop -\lambda_d}\right).$$
		Then the conclusions of this proposition follow from the solution existence and uniqueness results of the linear equation system (\ref{en10}).
	\end{proof}
	
	%From this theorem, we may discuss the structure of eigenvectors of $A$.

	%{\bf Remark} For an eigenvalue $\lambda_s$ of $A_s$, if the geometrical multiplicity of $\lambda_s$ is
	
	\bigskip
	
	\section{Diagonalization of Dual Complex Matrices}
	
	In this section, we establish a necessary and sufficient condition such that an $n \times n$ dual complex matrix is diagonalizable.
	
	Suppose that $\vx_1 = \vx_{1s}+\vx_{1d}\epsilon, \cdots, \vx_k = \vx_{ks}+\vx_{kd}\epsilon \in {\mathbb {DC}}^n$.  We say that $ \vx_1, \cdots, \vx_k $ are appreciably linearly independent if $ \vx_{1s}, \cdots, \vx_{ks} $ are linearly independent.
	
	\begin{Thm} \label{t4.1}
		Suppose that $A \in {\mathbb {DC}}^{n \times n}$.     Then $A$ is diagonalizable if and only if $A$ has exactly $n$ eigenvalues $\lambda_1, \cdots, \lambda_n$ with eigenvectors $\vx_1, \cdots, \vx_n$, which are appreciably linearly independent.
	\end{Thm}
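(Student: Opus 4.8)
The plan is to prove both implications by passing between a dual complex change-of-basis matrix and its standard part, the bridge in each direction being Proposition~\ref{p3.1}: a dual complex matrix $P = P_s + P_d\epsilon$ is invertible if and only if its standard part $P_s$ is invertible. This is what will let me convert the notion of \emph{appreciable} linear independence (a condition on standard parts alone) into the existence of an honest dual complex similarity, and conversely.

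For necessity, I would suppose $A$ is diagonalizable, say $A = PDP^{-1}$ with $D = {\rm diag}(\lambda_1, \dots, \lambda_n)$ and $P$ invertible. By Proposition~\ref{p3.3}, $A$ then has exactly the $n$ eigenvalues $\lambda_1, \dots, \lambda_n$, with eigenvectors $\vx_i = P\ve_i$ for $i = 1, \dots, n$. The standard part of $\vx_i$ is $P_s\ve_i$, which is the $i$th column of $P_s$. Since $P$ is invertible, Proposition~\ref{p3.1} gives that $P_s$ is invertible, so its columns $\vx_{1s}, \dots, \vx_{ns}$ are linearly independent; by definition this says precisely that $\vx_1, \dots, \vx_n$ are appreciably linearly independent.

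For sufficiency, I would start from the hypothesis that $A$ has eigenvalues $\lambda_1, \dots, \lambda_n$ with appreciably linearly independent eigenvectors $\vx_1, \dots, \vx_n$, assemble these as the columns of $P = (\vx_1\ \cdots\ \vx_n)$, and set $D = {\rm diag}(\lambda_1, \dots, \lambda_n)$. From $A\vx_i = \lambda_i\vx_i$ for each $i$, reading the identity column by column yields $AP = PD$. Appreciable linear independence says exactly that the columns $\vx_{1s}, \dots, \vx_{ns}$ of $P_s$ are linearly independent, hence $P_s$ is invertible, and so by Proposition~\ref{p3.1} the matrix $P$ is invertible as a dual complex matrix. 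Multiplying $AP = PD$ on the right by $P^{-1}$ then gives $A = PDP^{-1}$, so $A$ is diagonalizable.

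The argument is largely bookkeeping, and the single conceptual point — the only place where something could plausibly go wrong — is the passage from a statement about standard parts alone (appreciable linear independence, equivalently invertibility of $P_s$) to a statement about the full dual complex matrix (invertibility of $P$). I expect this to be the crux, but it is resolved cleanly by Proposition~\ref{p3.1}, so I anticipate no genuine obstruction; the care needed is merely to confirm that the $P$ built from the eigenvectors really is the correct change-of-basis matrix, i.e.\ to verify $AP = PD$ before inverting.
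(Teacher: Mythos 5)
Your proposal is correct and follows essentially the same route as the paper: both directions hinge on Proposition~\ref{p3.1} to pass between invertibility of $P$ and of $P_s$, with the necessity direction obtained via Proposition~\ref{p3.2}/\ref{p3.3} applied to $D$ and the sufficiency direction by assembling the eigenvectors into $P$ and verifying $AP = PD$. The only difference is that you spell out the column-by-column verification of $AP = PD$, which the paper leaves implicit.
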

	\begin{proof}
		Suppose that $A$ is diagonalizable.   Then $A = PDP^{-1}$, where $D = {\rm diag}(\lambda_1, \cdots, \lambda_n)$, $\lambda_1, \cdots, \lambda_n$ are dual complex numbers.   By definition, $D$ has exactly $n$ eigenvalues $\lambda_1, \cdots, \lambda_n$, with eigenvectors $\ve_1, \cdots, \ve_n$.   By Proposition \ref{p3.2}, $A$ has exactly $n$ eigenvalues $\lambda_1, \cdots, \lambda_n$, with eigenvectors $\vx_1=P\ve_1, \cdots, \vx_n=P\ve_n$.   Let $P = P_s + P_d\epsilon$ and $\vx_j = \vx_{js} + \vx_{jd}\epsilon$ for $j = 1, \cdots, n$.   Then $P_s =(\vx_{1s}\ \ \cdots\ \vx_{ns})$.   By Proposition \ref{p3.1}, $P_s^{-1}$ exists.
		This implies that  $\vx_{1s}, \cdots, \vx_{ns}$ are linearly independent, i.e., $\vx_1, \cdots, \vx_n$ are appreciably linearly independent.
		
		On the other hand, suppose that $A$ has $n$ eigenvalues $\lambda_1, \cdots, \lambda_n$, with eigenvectors $\vx_1, \cdots, \vx_n$, which are appreciably linearly independent.   Let $P = (\vx_1 \ \cdots \ \vx_n) = P_s +P_d\epsilon$.
		Then $P_s^{-1}$ exists.    By Proposition \ref{p3.1}, $P$ is invertible.  Then $A = PDP^{-1}$, where $D = {\rm diag}(\lambda_1, \cdots, \lambda_n)$, i.e., $A$ is diagonalizable.
	\end{proof}
	
	Note that if $A$ is diagonalizable, then $A_s$ must be diagonalizable, but the converse is not true.   See the following example.
	
	\medskip
	
	{\bf Example 3}  - a dual number matrix $A$, which is not diagonalizable, though $A_s$ is diagonalizable.

	Let $A = A_s+A_d\epsilon$, where
	\begin{equation}
		A_s = \left(\begin{array}{cc}
			1 & 0 \\
			0 & 1
		\end{array} \right)
		\text{ and }
		A_d = \left(\begin{array}{cc}
			1 & 1 \\
			0 & 1
		\end{array} \right).
	\end{equation}
	Then $A_s$ is the identity matrix, thus a diagonal matrix.  On the other hand, we may check that $A$ has only one eigenvalue $\lambda = 1 + \epsilon$, with eigenvectors {$\vx=[\alpha,0]^\top + \vx_d\epsilon$, where $\alpha$ is a nonzero complex number, and $\vx_d$ is any complex vector in ${\mathbb C}^2$.}  By Theorem \ref{t4.1}, $A$ is not diagonalizable.

	\bigskip
	
	\section{The Jordan Form of A Dual Complex Matrix with A Diagonalizable Standard Part}
	
	To know the situation in the general case, one has to explore the Jordan form theory of dual complex matrices.    Here, we need to point out the difficulty in establishing the Jordan form theory of dual complex matrices.   In the case of complex matrices, a key tool for establishing the Jordan theory is {Schur's} unitary triangularization theorem \cite{HJ12}.   In the case of the dual complex matrices, {triangularization} may not be possible, as there are examples of dual complex matrices which have no eigenvalues at all, but an upper triangular matrix has eigenvalues as its diagonal entries.

	However, we may start to explore the Jordan form theory of dual complex matrices in some special cases.
	We first consider the case that $A_s = \mathrm{diag}(\lambda_{1s}I_{n_1},\cdots,\lambda_{ts}I_{n_t})$,  $n_1+\cdots + n_t = n$  and $\lambda_{1s},\cdots,\lambda_{ts} \in {\mathbb C}$   are  distinct  complex numbers.
	For an $n\times n$ matrix $G$, denote the $(i,j)$-th subblock  as $G_{ij}$, which is corresponding to the  $\sum_{l=1}^{i-1}n_l+1,\dots, \sum_{l=1}^{i}n_l$ rows and the $\sum_{l=1}^{j-1}n_l+1,\dots, \sum_{l=1}^{j}n_l$ columns of $G$.
	
	Denote an $m \times m$ Jordan block by
	\begin{equation}
		J_m(\lambda) :=\left(\begin{array}{ccccc}
			\lambda  & 1 & 0 & \dots &  0 \\
			0 & \lambda &  1 & \dots &  0 \\
			\dots & \dots & \dots & \dots & \dots \\
			0  & 0 & 0 & \dots &  1 \\
			0  & 0 & 0 & \dots &  \lambda \\
		\end{array} \right).
	\end{equation}

	\begin{Lem}\label{lem5.1}
		Suppose that $A = A_s + A_d\epsilon \in {\mathbb {DC}}^{n \times n}$, where $A_s = \mathrm{diag}(\lambda_{1s}I_{n_1},\dots,\lambda_{ts}I_{n_t})$,  $n_1+\cdots + n_t = n$  and $\lambda_{1s},\dots,\lambda_{ts} \in {\mathbb C}$   are  distinct  complex numbers. Then let $P_{is}\in\mathbb C^{n_i\times n_i}$, $i=1,\dots,t$ be  invertible complex matrices such that {$J_{id} = P_{is}^{-1} A_{iid}P_{is}$} are Jordan  matrices of $A_{iid}$, and
		\begin{equation}
			P=\left(\begin{array}{cccc}
				P_{1s}  & O & \dots &  O \\
				O & P_{2s} &  \dots &  O \\
				\dots & \dots & \ddots& \dots \\
				O  &O & \dots &  P_{ts} \\
			\end{array} \right)
			+\left(\begin{array}{cccc}
				O  & P_{12d}\epsilon & \dots &  P_{1td}\epsilon \\
				P_{21d}\epsilon & O &  \dots &  P_{2td}\epsilon \\
				\dots & \dots &\ddots & \dots \\
				P_{t1d}{\epsilon}  & P_{t2d}\epsilon & \dots &  O \\
			\end{array} \right),
			% \left(\begin{array}{cccc}
			% 	P_{1s}  & P_{12d}\epsilon & \dots &  P_{1td}\epsilon \\
			% 	P_{21d}\epsilon & P_{2s} &  \dots &  P_{2td}\epsilon \\
			% 	\dots & \dots & \dots & \dots \\
			%  	P_{t1d}{\epsilon}  & P_{t2d}\epsilon & \dots &  P_{ts} \\
			% \end{array} \right),
		\end{equation}
		where $P_{ijd} = (\lambda_{js}-\lambda_{is})^{-1}A_{ijd} P_{js}$ for $i\neq j$ and $P_{iid}=O_{n_i\times n_i}$.
		Then we have
		\begin{equation} \label{en13}
			P^{-1}AP = \mathrm{diag}\left(\lambda_{1s}I_{n_1}+J_{1d}\epsilon,\dots,\lambda_{ts}I_{n_t}+J_{td}\epsilon \right).
		\end{equation}
	\end{Lem}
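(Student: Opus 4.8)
The plan is to work directly from the product $P^{-1}AP$ and isolate its standard and dual parts. First I would record that $P_s = \mathrm{diag}(P_{1s},\dots,P_{ts})$ is invertible, since each $P_{is}$ is invertible and $P_s$ is block diagonal; hence by Proposition \ref{p3.1} the matrix $P$ is invertible with $P^{-1} = P_s^{-1} - P_s^{-1}P_dP_s^{-1}\epsilon$, where $P_d$ denotes the purely off-diagonal dual part of $P$ (so that $P = P_s + P_d\epsilon$).

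Next I would expand
$$P^{-1}AP = \left(P_s^{-1} - P_s^{-1}P_dP_s^{-1}\epsilon\right)(A_s + A_d\epsilon)(P_s + P_d\epsilon)$$
and collect terms using $\epsilon^2 = 0$. The standard part is $P_s^{-1}A_sP_s$; because $A_s = \mathrm{diag}(\lambda_{1s}I_{n_1},\dots,\lambda_{ts}I_{n_t})$ consists of scalar multiples of identity blocks and $P_s$ is block diagonal, each diagonal block satisfies $P_{is}^{-1}(\lambda_{is}I_{n_i})P_{is} = \lambda_{is}I_{n_i}$, so $P_s^{-1}A_sP_s = A_s$. Using this identity to rewrite the cross term arising from $P^{-1}$, the dual part simplifies to
$$D := P_s^{-1}(A_sP_d - P_dA_s) + P_s^{-1}A_dP_s,$$
and the remaining task is to show $D = \mathrm{diag}(J_{1d},\dots,J_{td})$.

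The heart of the argument is a blockwise computation of $D$. Writing $(G)_{ij}$ for the $(i,j)$ subblock, I would check that
$$(D)_{ij} = (\lambda_{is}-\lambda_{js})P_{is}^{-1}P_{ijd} + P_{is}^{-1}A_{ijd}P_{js}.$$
For $i=j$ the first summand vanishes (both because $\lambda_{is}-\lambda_{is}=0$ and because $P_{iid}=O$), leaving $(D)_{ii} = P_{is}^{-1}A_{iid}P_{is} = J_{id}$, exactly the prescribed diagonal block. For $i \neq j$ I would substitute $P_{ijd} = (\lambda_{js}-\lambda_{is})^{-1}A_{ijd}P_{js}$, so that $(\lambda_{is}-\lambda_{js})P_{is}^{-1}P_{ijd} = -P_{is}^{-1}A_{ijd}P_{js}$, whence the two summands cancel and $(D)_{ij}=O$. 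This is precisely why the off-diagonal dual blocks of $P$ were given that scaling factor, and it is where the hypothesis that $\lambda_{1s},\dots,\lambda_{ts}$ are distinct enters: the factor $(\lambda_{js}-\lambda_{is})^{-1}$ is well defined only because $\lambda_{js}\neq\lambda_{is}$ for $i\neq j$. Assembling the blocks yields $D = \mathrm{diag}(J_{1d},\dots,J_{td})$ and hence (\ref{en13}).

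I do not anticipate a genuine obstacle; the argument is a bounded computation once $P^{-1}$ is substituted. The only point requiring care is the bookkeeping of the dual part after invoking $P_s^{-1}A_sP_s = A_s$ to eliminate one term, together with the sign in the off-diagonal cancellation, which must line up exactly with the prescribed form of $P_{ijd}$.
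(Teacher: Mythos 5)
Your proposal is correct and follows essentially the same route as the paper: expand $P^{-1}AP$ using Proposition \ref{p3.1}, use $P_s^{-1}A_sP_s=A_s$ to simplify the dual part to a commutator-plus-conjugation expression, and verify blockwise that the diagonal blocks give $J_{id}$ while the off-diagonal blocks cancel by the choice of $P_{ijd}$. The only cosmetic difference is that the paper writes the commutator term as $A_sP_s^{-1}P_d - P_s^{-1}P_dA_s$ rather than $P_s^{-1}(A_sP_d-P_dA_s)$, which is the same thing since $A_s$ commutes with the block-diagonal $P_s^{-1}$.
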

	\begin{proof}
		Let $J_0=P^{-1}AP$.
		By direct computations, we have
		$$J_0=P_s^{-1}A_sP_s+ (P_s^{-1}A_sP_d+P_s^{-1}A_dP_s-P_s^{-1}P_dP_s^{-1}A_sP_s)\epsilon.$$
		Then it holds that
		\begin{eqnarray*}
			J_d & :=	 &P_s^{-1}A_sP_d+P_s^{-1}A_dP_s-P_s^{-1}P_dP_s^{-1}A_sP_s \\
			&=&P_s^{-1}A_sP_sP_s^{-1}P_d+P_s^{-1}A_dP_s-P_s^{-1}P_dP_s^{-1}A_sP_s\\
			&=& A_sP_s^{-1}P_d  + P_s^{-1}A_dP_s-P_s^{-1}P_dA_s, \ \
		\end{eqnarray*}
		where $P_s^{-1}A_sP_s=A_s$ in the last equality.
		Hence, 	 by  the choice of  $P_{is}$,    the diagonal subblocks $J_{id} = P_{is}^{-1}A_{iid}P_{is}$ are Jordan matrices.
		For $i\neq j$, by  the choice of  $P_{ijd}$, we have
		\begin{eqnarray*}
			J_{ijd}  &=&  \lambda_{is}P_{is}^{-1}P_{ijd}+P_{is}^{-1}A_{ijd}P_{js} -\lambda_{js} P_{is}^{-1}P_{ijd}\\
			&=& O_{n_i\times n_j}.
		\end{eqnarray*}
		Then (\ref{en13}) follows. 		This completes the proof.
	\end{proof}
	
	In the above lemma, assume that $J_{id} = \mathrm{diag}\left(J_{i1d},\dots,J_{ik_id}\right)$, where $J_{ijd}$ for $j = 1, \cdots, k_i$, are the Jordan blocks of $J_{id}$, in the form of $J_{ijd} = J_{m_{ij}}(\lambda_{ijd})$,  where $\lambda_{ijd}$ for $j = 1, \cdots, k_i, i = 1, \cdots, t$, are complex numbers.
	%Now, let {$J := P^{-1}AP$.}
	Then we have
	\begin{equation} \label{en16}
		J_0  = \mathrm{diag}\left(J_{11},\dots,J_{{1}k_1},J_{21}, \cdots, J_{tk_t}\right),
	\end{equation}
	where
	\begin{equation} \label{en17}
		J_{ij} =\left(\begin{array}{ccccc}
			\lambda_{is} + \lambda_{ijd}\epsilon  & \epsilon & 0 & \dots &  0 \\
			0 & \lambda_{is} + \lambda_{ijd}\epsilon &  \epsilon & \dots &  0 \\
			\dots & \dots & \dots &  \dots  & \dots \\
			0  & 0 & 0 & \dots &  \epsilon \\
			0  & 0 & 0 & \dots &  \lambda_{is} + \lambda_{ijd}\epsilon \\
		\end{array} \right),
	\end{equation}
	for $j = 1, \cdots, k_i, i = 1, \cdots, t$.   We call $J_0$ the {\bf Jordan matrix} of the dual complex matrix $A$, and $J_{ij}$, for $j = 1, \cdots, k_i, i = 1, \cdots, t$, the {\bf Jordan blocks} of $A$.
	
	In this way, we have the following theorem.
	\begin{Thm} \label{t5.2}
		Suppose that $B = B_s + B_d\epsilon \in {\mathbb {DC}}^{n \times n}$, where $B_s$ is diagonalizable,
		$B_s = Q_sA_sQ_s^{-1}$, $A_s = \mathrm{diag}(\lambda_{1s}I_{n_1},\dots,\lambda_{ts}I_{n_t})$ and $\lambda_{1s},\dots,\lambda_{ts} \in {\mathbb C}$   are distinct complex numbers.   Let $A_d = Q_s^{-1}B_dQ_s$.  Then $B$ has the Jordan form $J_0$ as expressed by (\ref{en16}) and (\ref{en17}), with {$J_0= P^{-1}Q_s^{-1}BQ_sP$.}
		Furthermore, $B$ has $k_1+\cdots + k_t$ eigenvalues
		$\lambda_{is} + \lambda_{ijd}\epsilon$, for $j = 1, \cdots, k_i$, $i=1,\dots,t$, with appreciably linearly independent eigenvectors ${\vx_{ijs}} + \vx_{ijd}\epsilon$, for $j = 1, \cdots, k_i$, $i=1,\dots,t$.
		Here,  {one choice of} $\vx_{ijs}+\vx_{ijd}\epsilon$ is the $\sum_{l_i=1}^{i-1}n_{l_i}+\sum_{l_j=1}^{j-1}m_{i,l_j}+1$ column of $Q_sP$, $n_0=0$, $m_{i,0}=0$ for $j = 1, \cdots, k_i$, $i=1,\dots,t$.
	\end{Thm}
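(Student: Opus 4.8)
The plan is to reduce the general diagonalizable-standard-part case to the normalized situation already treated in Lemma \ref{lem5.1}, and then to read off the eigenpairs block by block from the resulting Jordan matrix $J_0$. First I would regard the invertible complex matrix $Q_s$ as a dual complex matrix with vanishing dual part; by Proposition \ref{p3.1} its dual complex inverse is simply $Q_s^{-1}$. A direct multiplication then gives
\begin{equation*}
Q_s^{-1}BQ_s = Q_s^{-1}B_sQ_s + (Q_s^{-1}B_dQ_s)\epsilon = A_s + A_d\epsilon,
\end{equation*}
which is exactly the matrix $A$ appearing in Lemma \ref{lem5.1}, since $A_s = Q_s^{-1}B_sQ_s$ is the prescribed block-scalar matrix and $A_d = Q_s^{-1}B_dQ_s$ by hypothesis. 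Applying Lemma \ref{lem5.1} to $A$ produces the invertible $P$ with $P^{-1}AP = \mathrm{diag}(\lambda_{1s}I_{n_1}+J_{1d}\epsilon,\dots,\lambda_{ts}I_{n_t}+J_{td}\epsilon)$. Expanding each diagonal block into its Jordan blocks $J_{ijd}=J_{m_{ij}}(\lambda_{ijd})$ and absorbing the $\epsilon$ into the entries turns this matrix into precisely the $J_0$ of (\ref{en16})--(\ref{en17}); composing the two similarities then yields $J_0 = P^{-1}Q_s^{-1}BQ_sP = (Q_sP)^{-1}B(Q_sP)$, which is the first assertion.

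Next I would determine the eigenpairs of $J_0$ directly from its block structure, using the reduced equations (\ref{en2}) and (\ref{en3}). Any eigenvalue $\lambda=\lambda_s+\lambda_d\epsilon$ of $J_0$ forces, via (\ref{en2}), $\lambda_s$ to be an eigenvalue of the standard part $\mathrm{diag}(\lambda_{1s}I_{n_1},\dots,\lambda_{ts}I_{n_t})$, hence $\lambda_s=\lambda_{is}$ for some $i$, with $\vx_s$ supported on the $i$-th coordinate block. On that block the coefficient $A_s-\lambda_sI_n$ in (\ref{en3}) vanishes, so (\ref{en3}) collapses to $J_{id}\vx_s = \lambda_d\vx_s$; since $J_{id}$ is a direct sum of single Jordan blocks $J_{m_{i1}}(\lambda_{i1d}),\dots,J_{m_{ik_i}}(\lambda_{ik_id})$, its eigenvalues are exactly $\lambda_{i1d},\dots,\lambda_{ik_id}$, and for the value $\lambda_{ijd}$ the eigenvector space is spanned by the first unit vector of the corresponding Jordan sub-block. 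This shows the eigenvalues of $J_0$ are precisely the $k_1+\cdots+k_t$ numbers $\lambda_{is}+\lambda_{ijd}\epsilon$, each carrying the standard eigenvector $\ve_m$ with $m=\sum_{l_i=1}^{i-1}n_{l_i}+\sum_{l_j=1}^{j-1}m_{i,l_j}+1$, while the remaining dual component $\vx_d$ stays free.

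Finally I would transport these eigenpairs back to $B$ by Proposition \ref{p3.2}: since $J_0=(Q_sP)^{-1}B(Q_sP)$, the vector $(Q_sP)\ve_m$, i.e.\ the $m$-th column of $Q_sP$, is an eigenvector of $B$ for the eigenvalue $\lambda_{is}+\lambda_{ijd}\epsilon$, as claimed. Appreciable linear independence then follows because the standard part of $Q_sP$ is $Q_sP_s$ with $P_s=\mathrm{diag}(P_{1s},\dots,P_{ts})$ invertible and $Q_s$ invertible, so $Q_sP_s$ is invertible and the indices $m$ pick out linearly independent columns of it. I expect the main obstacle to be the bookkeeping in the second step: one must verify carefully that the Jordan-block structure of the dual part $J_{id}$ makes the eigenvector standard parts within each block exactly one-dimensional, so that the count of appreciably linearly independent eigenvectors is precisely $k_1+\cdots+k_t$ and no spurious eigenvalues are introduced by the free dual component $\vx_d$.
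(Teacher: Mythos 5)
Your proposal is correct and follows essentially the same route as the paper: conjugate $B$ by $Q_s$ (viewed as a dual complex matrix with zero dual part) to reduce to Lemma \ref{lem5.1}, expand the diagonal blocks $\lambda_{is}I_{n_i}+J_{id}\epsilon$ into the Jordan blocks of (\ref{en16})--(\ref{en17}), and transport the eigenpairs back via Proposition \ref{p3.2}. The paper's proof is just a one-line citation of these ingredients, so your block-by-block verification that the eigenpairs of $J_0$ are exactly $\lambda_{is}+\lambda_{ijd}\epsilon$ with standard eigenvectors $\ve_m$ is a welcome filling-in of the details rather than a deviation.
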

	\begin{proof}  Let {$A = Q_s^{-1}BQ_s$.}   Then this theorem follows from Lemma \ref{lem5.1}, {Proposition \ref{p3.2},} and the argument before this theorem.
	\end{proof}

	\begin{Cor}\label{cor5.3}
		Suppose that $B = B_s + B_d\epsilon \in {\mathbb {DC}}^{n \times n}$. Then $B$ is diagonalizable if and only if
		the following two conditions hold:
		\begin{itemize}
			\item[(i)]  $B_s$ is diagonalizable. Namely, there exists an invertiable matrix $Q_s\in \mathbb C^{n\times n}$ and a diagonal matrix $A_s$ such that  $B_s = Q_sA_sQ_s^{-1}$, where  $A_s = \mathrm{diag}(\lambda_{1s}I_{n_1}$, $\dots,$ $\lambda_{ts}I_{n_t})$ and $\lambda_{1s},\dots,\lambda_{ts} \in {\mathbb C}$   are distinct complex numbers.
			\item[(ii)]  For $i=1,\dots,t$, there exist  invertiable matrices $P_{is}\in\mathbb C^{n_i\times n_i}$ such that
			\begin{equation*}
				J_{id} = P_{is}^{-1} A_{iid} P_{is}
			\end{equation*}
			are diagonal matrices,  where $A_d = Q_s^{-1}B_dQ_s$, $A_{iid}$ is the  subblock of {$A_d$} consisting of the  $\sum_{l=1}^{i-1}n_l+1,\dots, \sum_{l=1}^{i}n_l$ rows and columns of $A_d$,
			{where $n_0=0$.}
		\end{itemize}
		
	\end{Cor}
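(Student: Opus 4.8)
The plan is to transport everything through the normal form of Lemma \ref{lem5.1} and then read diagonalizability off the shape of the blocks in (\ref{en17}). Set $A = Q_s^{-1}BQ_s = A_s + A_d\epsilon$, where $Q_s$ (with zero dual part) is invertible over $\mathbb{DC}$ by Proposition \ref{p3.1}; hence $A \sim B$ and $A$ is diagonalizable if and only if $B$ is. Condition (i) is exactly the statement that $B_s$ is diagonalizable, which is already known to be necessary by the remark following Theorem \ref{t4.1}. So the real content, given (i), is the equivalence of (ii) with the diagonalizability of $A$.

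The backward direction is immediate from Lemma \ref{lem5.1}. If each $A_{iid}$ is diagonalizable, choose the $P_{is}$ so that $J_{id}=P_{is}^{-1}A_{iid}P_{is}$ is not merely a Jordan matrix but an honest \emph{diagonal} matrix. Then (\ref{en13}) reads $P^{-1}AP=\mathrm{diag}(\lambda_{1s}I_{n_1}+J_{1d}\epsilon,\dots,\lambda_{ts}I_{n_t}+J_{td}\epsilon)$, which is a diagonal dual complex matrix. Thus $A$, and therefore $B$, is diagonalizable.

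For the forward direction the crucial observation is that eigenvectors localize to a single diagonal block. Suppose $A$ is diagonalizable. By Theorem \ref{t4.1} it has $n$ appreciably linearly independent eigenvectors $\vx^{(1)},\dots,\vx^{(n)}$. Since $\lambda_{1s},\dots,\lambda_{ts}$ are distinct, the eigenspaces of $A_s$ are exactly the coordinate blocks $V_i\cong\mathbb{C}^{n_i}$, so by (\ref{en2}) the standard part of each $\vx^{(k)}$ lies in a single $V_{i(k)}$. Reading off the $i$-th block row of (\ref{en3}): the $i$-th diagonal block of $A_s-\lambda_{is}I$ vanishes, $\vx_s$ is supported on block $i$, so the off-diagonal pieces $A_{ijd}$ ($j\neq i$) drop out, leaving $A_{iid}\vx_s=\lambda_d\vx_s$. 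Hence the standard part of every eigenvector with standard eigenvalue $\lambda_{is}$ is an eigenvector of the complex matrix $A_{iid}$.

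It remains to count. The standard parts $\vx_s^{(1)},\dots,\vx_s^{(n)}$ are linearly independent in $\mathbb{C}^n=\bigoplus_i V_i$, and each lies in some $V_i$; since independent vectors in a single $V_i$ number at most $n_i$ and $\sum_i n_i=n$, exactly $n_i$ of them fall in each $V_i$, forming a basis of $V_i$. By the previous paragraph these $n_i$ standard parts are linearly independent eigenvectors of $A_{iid}$, so $A_{iid}$ has a full eigenbasis and is diagonalizable; this is condition (ii). The main obstacle is precisely this last bookkeeping: one must verify that the standard parts genuinely decouple along the distinct-eigenvalue blocks, that the off-diagonal dual pieces $A_{ijd}$ play no role in the localized equation, and that a complete set of $n$ appreciably independent eigenvectors distributes as exactly $n_i$ per block so that each $A_{iid}$ inherits $n_i$ independent eigenvectors.
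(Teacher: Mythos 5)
Your proposal is correct. The backward direction is exactly the paper's route: Corollary \ref{cor5.3} is stated without a separate proof precisely because Lemma \ref{lem5.1} already shows that choosing each $P_{is}$ to diagonalize $A_{iid}$ makes $P^{-1}AP$ diagonal. For the forward direction the paper's implicit argument goes through the Jordan form of Theorem \ref{t5.2}: a diagonalizable $B$ must have exactly $n$ eigenvalues by Theorem \ref{t4.1}, while the Jordan matrix $J_0$ supplies only $k_1+\cdots+k_t$ of them, forcing every Jordan block of every $J_{id}$ to be $1\times 1$. You instead argue directly: you localize the standard part of each eigenvector to a single eigenspace $V_i$ of $A_s$, observe that the $i$-th block row of (\ref{en3}) collapses to $A_{iid}\vx_{is}=\lambda_d\vx_{is}$ because the diagonal block of $A_s-\lambda_{is}I$ vanishes and $\vx_s$ is supported on block $i$, and then count: $n$ independent standard parts distributed among subspaces of dimensions $n_i$ summing to $n$ must give exactly $n_i$ per block, hence a full eigenbasis for each $A_{iid}$. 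This is a genuinely more self-contained forward direction, since it does not rely on the (unproved in the paper) claim that Theorem \ref{t5.2} enumerates \emph{all} eigenvalues of $B$; what it costs is the extra bookkeeping you flag at the end, all of which checks out. The appeal to the remark after Theorem \ref{t4.1} for the necessity of (i) is legitimate, as that remark follows from Proposition \ref{p3.1} applied to the standard parts.
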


	{Suppose that $A_s$ is diagonalizable. Then {the condition that} $A_d$ is diagonalizable {is neither sufficient nor necessary} for  $A$ to be diagonalizable.  See the following examples.
		
		\medskip
		
		{\bf Example 4}  - a dual number matrix $A$, which is not diagonalizable, though both $A_s$ and $A_d$ are diagonalizable.
		
		Let $A = A_s+A_d\epsilon$, where
		\begin{equation}
			A_s = \left(\begin{array}{ccc}
				1 & 0  & 0\\
				0 & 1 & 0 \\
				0 & 0 & 2
			\end{array} \right)
			\text{ and }
			A_d = \left(\begin{array}{ccc}
				1 & 1 & 1 \\
				0 & 1 & 0 \\
				1 & 0 & 1
			\end{array} \right).
		\end{equation}
		Then $A_s$ is a diagonal matrix. $A_d$ has eigenvalues $0, 1, 2$, thus diagonalizable. On the other hand, we have
		$$P_s=I_3, \ P_d=\left(\begin{array}{ccc}
			0 & 0 & 1 \\
			0 & 0 & 0 \\
			-1 & 0 & 0
		\end{array} \right), \
		J_s=A_s, \text{ and } J_d = \left(\begin{array}{ccc}
			1 & 1 & 0 \\
			0 & 1 & 0 \\
			0 & 0 & 1
		\end{array} \right).$$
		Then we may check that $A$ has only two eigenvalues $\lambda_1 = 1 + \epsilon$  and  $\lambda_2 = 2 + \epsilon$.
		%$\lambda_1 = 1 + \epsilon$  with eigenvectors $\vx_1=[\alpha,0,0]^\top + [\beta_1,\beta_2,-\alpha]\epsilon$, where $\alpha$ is a nonzero complex number, and $\beta_1$ and $\beta_2$ are any complex numbers.  $\lambda_2 = 2 + \epsilon$  with eigenvectors $\vx_2=[0,0,\alpha]^\top + [\alpha,0,\beta]\epsilon$, where $\alpha$ is a nonzero complex number, and $\beta$ is any complex number.
		By Theorem \ref{t4.1}, $A$ is not diagonalizable.
		
		\medskip
		
		{\bf Example 5}  - a diagonalizable dual number matrix $A$,   yet   $A_d$ is not diagonalizable.
		
		Let $A = A_s+A_d\epsilon$, where
		\begin{equation}
			A_s = \left(\begin{array}{ccc}
				1 & 0  & 0\\
				0 & 1 & 0 \\
				0 & 0 & 2
			\end{array} \right)
			\text{ and }
			A_d = \left(\begin{array}{ccc}
				1 & 0 & 0 \\
				0 & 1 & 1 \\
				1 & 0 & 1
			\end{array} \right).
		\end{equation}
		Then $A_s$ is a diagonal matrix. $A_d$ has only one eigenvalue  $1$, thus is not diagonalizable. On the other hand,   we have
		$$P_s=I_3, \ P_d=\left(\begin{array}{ccc}
			0 & 0 & 0 \\
			0 & 0 & 1 \\
			-1 & 0 & 0
		\end{array} \right), \
		J_s=A_s, \text{ and } J_d = I_3.$$
		Thus $A$ is diagonalizable since $J=P^{-1}AP$ is diagonal.
	}

	\bigskip
	
	{\section{The Jordan Form of A Dual Complex Matrix with A Jordan Block Standard Part}
		
		We now consider the Jordan form of a dual complex matrix $A = A_s + A_d\epsilon \in {\mathbb {DC}}^{n \times n}$ for $n \ge 2$, whose standard part is a Jordan block in the form of
		$A_s  =J_n(\lambda_s)$,
		where $\lambda_s$ is a fixed complex number.    We now consider $J = P^{-1}AP$ with $P = P_s + P_d\epsilon \in {\mathbb {DC}}^{n \times n}$, to see what kind of $P$ makes $J$ as simple as possible.    Write $J = J_s + J_d\epsilon \in {\mathbb {DC}}^{n \times n}$.   Then $J_s = P_s^{-1}A_sP_s$.  As $A_s$ is already a standard simple form, we have to take $P_s = I_n$ and $J_s = A_s$.  Then the question is:  what kind of $P_d$ makes $J_d$ as simple as possible?
		We have the following theorem.
		
		\begin{Thm}\label{t6.1}
			Suppose that $A = A_s + A_d\epsilon \in {\mathbb {DC}}^{n\times n}$ is a dual complex matrix for $n \ge 2$, with $A_s = J_n(\lambda_s)$, and $\lambda_s$ a fixed complex number there.
			Let $P_d\in\mathbb C^{n\times n}$ with
			\begin{equation}
				p_{j+l+1,j,d}=\left\{
				\begin{array}{ll}
					-a_{j+l,j,d},  &  \text{ if } j=1,0\le l\le n-2;\\
					p_{j+l,j-1,d}- a_{j+l,j,d},   &   \text{ if } 2\le j\le n-l-1, 0\le l\le n-2;\\
					p_{j+l,j-1,d}-a_{i+l,j,d},   &  \text{ if } 1-l\le j\le n, 1-n\le l\le -1;\\
					0,  &  \text{ otherwise.}\\
				\end{array}
				\right.
			\end{equation}
			Then for  $J = P^{-1}AP$ with $P = I_n + P_d\epsilon \in {\mathbb {DC}}^{n \times n}$, we have $J_s=A_s$ and
			\begin{equation} \label{en25}
				J_d =\left(\begin{array}{cccc}
					0 & 0 & \dots & 0 \\
					\vdots &  \vdots &  \vdots &  \vdots \\
					0 & 0 & \dots & 0 \\
					f_{n1d} & f_{n2d} & \dots & f_{nnd}
				\end{array} \right),
			\end{equation}
			where $f_{n1d} = a_{n1d}$, $f_{nid} = a_{nid}-p_{n,i-1,d}$ for $i = 2, \cdots, n$.
		\end{Thm}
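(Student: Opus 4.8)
The plan is to reduce the whole statement to a single linear recursion on the entries of $P_d$ and then check that the prescribed matrix solves it. First I would compute $J = P^{-1}AP$ directly. Since $P = I_n + P_d\epsilon$ has standard part $I_n$, Proposition \ref{p3.1} shows $P$ is invertible with $P^{-1} = I_n - P_d\epsilon$. Expanding $J = (I_n - P_d\epsilon)(A_s + A_d\epsilon)(I_n + P_d\epsilon)$ and using $\epsilon^2 = 0$ gives $J_s = A_s$ together with $J_d = A_d + A_sP_d - P_dA_s$. Writing $A_s = \lambda_sI_n + N$, where $N = J_n(0)$ is the nilpotent shift carrying ones on the superdiagonal, the scalar $\lambda_sI_n$ drops out of the commutator, so $J_d = A_d + NP_d - P_dN$. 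This is the central identity, and everything afterwards is bookkeeping on its entries.

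Next I would read off $J_d$ entrywise. Since $(NP_d)_{ij} = p_{i+1,j,d}$ for $i \le n-1$ and vanishes for $i = n$, while $(P_dN)_{ij} = p_{i,j-1,d}$ for $j \ge 2$ and vanishes for $j = 1$, I get $(J_d)_{ij} = a_{ijd} + p_{i+1,j,d} - p_{i,j-1,d}$ for every $i \le n-1$, with the convention $p_{i,0,d} = 0$. The target form (\ref{en25}) is precisely that all these entries vanish, i.e. that
\begin{equation*}
p_{i+1,j,d} = p_{i,j-1,d} - a_{ijd}, \qquad 1 \le i \le n-1,
\end{equation*}
and that the surviving last row is $(J_d)_{nj} = a_{njd} - p_{n,j-1,d}$, because $(NP_d)_{nj} = 0$. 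This last computation already delivers $f_{n1d} = a_{n1d}$ and $f_{nid} = a_{nid} - p_{n,i-1,d}$. Thus the theorem is equivalent to verifying that the piecewise definition of $P_d$ satisfies the displayed recursion with vanishing first row.

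The substantive step is therefore matching the three nontrivial branches of the formula to this one recursion. Setting $l = i - j$, the branch $j=1$ reads $p_{i+1,1,d} = -a_{i1d}$, which is the recursion at $j=1$ with $p_{i,0,d}=0$; the branch $2 \le j \le n-l-1$ (that is, $l \ge 0$, strictly below the diagonal) and the branch $l \le -1$ (on or above the diagonal, where I read the printed $a_{i+l,j,d}$ as the intended $a_{j+l,j,d}$) both reduce to $p_{i+1,j,d} = p_{i,j-1,d} - a_{ijd}$. I expect the main obstacle to be not any single computation but the index bookkeeping that shows these branches, together with the \emph{otherwise} clause, cover every entry consistently. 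The clean way to organize this is to observe that the recursion acts along each diagonal $\{(r,c) : r - c = \text{const}\}$, relating the entry in row $r$ to the one in row $r-1$; each such chain bottoms out either at column $1$ (the $j=1$ branch, a determined seed) or at row $1$ (forced to $0$ by the \emph{otherwise} clause, since one checks no branch ever assigns a row-$1$ entry). This makes the recursion acyclic and well-defined, shows the prescribed $P_d$ is its unique solution with zero first row, and hence forces $(J_d)_{ij} = 0$ for all $i \le n-1$, completing the proof.
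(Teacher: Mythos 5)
Your proposal is correct and follows essentially the same route as the paper: both reduce to the identity $J_d = A_d + A_sP_d - P_dA_s$, observe that only the nilpotent part of $A_s$ contributes to the commutator, and solve the resulting recursion $p_{i+1,j,d} = p_{i,j-1,d} - a_{ijd}$ along diagonals, with the last row surviving because $(NP_d)_{nj}=0$. The paper merely packages the same bookkeeping in diagonal-vector notation (its overdetermined/underdetermined case split for the matrices $B_l$ is your observation that lower-diagonal chains end at row $n$ while upper-diagonal chains start from a free row-$1$ seed), and your reading of the printed $a_{i+l,j,d}$ as $a_{j+l,j,d}$ is the intended one.
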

		\begin{proof}
			By the definition, we have
			\begin{equation} \label{en19}
				J_d = A_d + A_sP_d - P_dA_s.
			\end{equation}
			We may choose $P_d$ such that the entries of $J_d$ have as many zeros as possible.  Let $A_d = \left(a_{ijd}\right)$, $J_d = \left(f_{ijd}\right)$ (we use letter $f$ instead of $j$ for the entries of $J$ to avoid confusing with the index $j$) and $P_d=\left(p_{ijd}\right)$.
			The matrix equation \eqref{en19} contains $n^2$ linear equations.   According to the diagonal and sub-diagonals of that matrix equation, we may partition these $n^2$ linear equations into $2n-1$ groups.
			
			For an $n \times n$ matrix $G = (g_{ij})$, denote  the $l$-th (sub)diagonal vector as an $n_l$-dimensional vector $$G^{(l)}=[g_{l+1,1},g_{l+2,2},\dots,g_{n,n-l}], \text{ for } l=n-1,\dots,0,$$
			and
			$$G^{(l)}=[g_{1,1-l},g_{2,2-l},\dots,g_{n+l,n}], \text{ for } l=-1,\dots,1-n,$$
			respectively. Here, $n_l=n-|l|$.
			
			Then  the $n^2$ linear equations contained in  \eqref{en19} can be  partitioned  into $2n-1$ groups of linear equations   of $P_d^{(l)}$ for $l=n-1,\dots,1-n$  respectively.
			Now, consider the following three cases of these $2n-1$ groups.
			
			Case (i).  $l=n-1$. {We} have
			$$J_d^{(l)} = A_d^{(l)},$$
			or equivalently, {$f_{n1d}=a_{n1d}$}. Hence, if $a_{n1d}\neq0$, $f_{n1d}$ cannot be equal to zero no matter what $P_d$ we choose.
			
			Case (ii). $l=n-2,\dots,0$. {We} have
			\begin{equation}\label{equ:case2}
				J_d^{(l)} = A_d^{(l)} + B_l P_d^{(l+1)},
				% \left(\begin{array}{cccc}
				%     1 &  & & \\
				%    -1 & 1 & & \\
				%    & \ddots & \ddots & \\
				%   & & -1 & 1 \\
				%    & && -1
				% \end{array}\right)_{(n_{l+1}+1)\times n_{l+1}}P_d^{(l+1)};
			\end{equation}
			where $$B_l=\left(\begin{array}{c}
				I_{n_{l}-1}   \\
				O_{1\times (n_{l}-1)}
			\end{array}\right)- \left(\begin{array}{c}
				O_{1\times (n_{l}-1)} \\
				I_{n_{l}-1}
			\end{array}\right)\in\mathbb R^{n_l\times (n_l-1)}.$$
			$B_l$ is of full column rank.
			Hence, \eqref{equ:case2} is {an} overdetermined system of $P_d^{(l+1)}$.
			The linear system
			$A_d^{(l)} + B_l P_d^{(l+1)}= O_{n_l\times 1}$
			%may not have solution. This linear system
			has a solution if and only if
			\begin{equation}\label{eqn:A_dinspan}
				A_d^{(l)} \in \span\left(B_l \right).
			\end{equation}
			
			Hence,  $J_d^{(l)}=O_{n_l\times 1}$ may not hold.
			However, the linear system $\left(J_d^{(l)}\right)_{1:n_l-1}=O_{(n_l-1)\times 1}$  has  a unique solution as
			$\left(P_{d}^{(l+1)}\right)_1=- \left(A_{d}^{(l)}\right)_1$, and
			$$\left(P_{d}^{(l+1)}\right)_i= \left(P_{d}^{(l+1)}\right)_{i-1}-\left(A_{d}^{(l)}\right)_i, i=2,\dots,n_l-1.$$
			%then we have  $\left(J_{d}^{(l)}\right)_i=0$ for all $i=1,\dots,n_l-1$ and $l=n-2,\dots,0$.
			Equivalently,
			\begin{equation}
				p_{l+2,1d} = - a_{l+1,1d}, \ p_{l+i+1,i,d} =  p_{l+i,i-1,d}- a_{l+i,id}, i=2,\dots,n_l-1.
			\end{equation}
			Then we have  $f_{l+i,id}=0$ for all $i=1,\dots,n-l-1$, and
			$f_{n,n-l,d} = a_{n,n-l,d}-p_{n,n-l-1,d}$  for all $l=n-2,\dots,0$.
			
			{Case} (iii).  $l=-1,\dots,1-n$. Then  we have
			\begin{equation}\label{equ:case3}
				J_d^{(l)} = A_d^{(l)} +B_lP_d^{(l+1)},
			\end{equation}
			where
			{$$B_l=\left(\begin{array}{cc}
					O_{n_{l}\times 1} & I_{n_{l}}
				\end{array}\right) - \left(\begin{array}{cc}
					I_{n_{l}}  & O_{n_{l}\times 1}
				\end{array}\right)\in\mathbb R^{n_l\times(n_l+1)}.$$}
			$B_l$ is of full row rank.
			In this case, \eqref{equ:case3} is an underdetermined system. Hence,  we may  have infinitely many  choices to make $J_d^{(l)}= O_{n_l\times 1}$.
			
			Specifically, we set $\left(P_{d}^{(l+1)}\right)_1=0$,
			$$\left(P_{d}^{(l+1)}\right)_i=\left(P_{d}^{(l+1)}\right)_{i-1}- \left(A_{d}^{(l)}\right)_{i-1}, i=2,\dots,n_l+1.$$
			Then we have  $J_{d}^{(l)}=O_{n_l\times 1}$ for all  $l=-1,\dots,1-n$.
			Equivalently,  we set $p_{1,-l,d}=0$,
			\begin{equation}
				p_{i,i-l-1d} =   p_{i-1,i-l-2d}- a_{i-1,i-l-1d}, i=2,\dots,n+l+1.
			\end{equation}
			Then we have  $f_{i,i-l,d}=0$  for all $i=1,\dots,n+l$ and $l=-1,\dots,1-n$.
			
			This completes the proof.
			%With the above discussion, we have the following theorem.
		\end{proof}

		We may further write  $J =\lambda_s I + J_r$, where
		\begin{equation} \label{en26}
			J_r=\left(\begin{array}{ccccc}
				0 & 1 & 0 & \dots & 0 \\
				0 & 0 & 1 &\dots & 0\\
				\vdots &  \vdots &\vdots &  \vdots &  \vdots \\
				0 & 0 & 0&\dots & 1 \\
				f_{n1d}\epsilon & f_{n2d}\epsilon & f_{n3d}\epsilon & \dots & f_{nnd}\epsilon
			\end{array} \right)
		\end{equation}
		{is a  companion matrix.  We call $J$ the {\bf Jordan matrix}}}
	of the dual complex matrix $A = A_s + A_d\epsilon$, where $A_s= J_n(\lambda_s)$.
	
	Now, by using Theorem \ref{t3.4}, we have the following proposition.
	
	\begin{Prop}\label{prop6.2}
		Suppose that $A = A_s + A_d\epsilon \in {\mathbb {DC}}^{n\times n}$ is a dual complex matrix for $n \ge 2$, with $A_s= J_n(\lambda_s)$, and $\lambda_s$ a fixed complex number there.    If $a_{n1d} \not = 0$, then $A$ has no eigenvalue at all.   Otherwise, $A$ has eigenvalues $\lambda_s + \lambda_d\epsilon$, where $\lambda_d$ is any complex number.
	\end{Prop}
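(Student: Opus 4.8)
The plan is to invoke Theorem \ref{t3.4} directly, since it already converts the existence and multiplicity of eigenvalues of $A$ into two membership conditions built from $A_s - \lambda_s I_n$ and the eigenvectors $\vx_s$ of $A_s$. The only eigenvalue of $A_s = J_n(\lambda_s)$ is $\lambda_s$ itself, so the work reduces to understanding the geometry of the nilpotent matrix $N := A_s - \lambda_s I_n$, which is the shift with ones on the superdiagonal and zeros elsewhere.

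First I would record the two subspaces attached to $N$. Reading off the columns of $N$, its $j$-th column equals $\ve_{j-1}$ for $j \ge 2$ and the zero vector for $j = 1$, so ${\rm Span}(N) = \{v \in {\mathbb C}^n : v_n = 0\}$, the hyperplane of vectors with vanishing last entry. Likewise $Nv = (v_2, \ldots, v_n, 0)^\top$ vanishes exactly when $v$ is a multiple of $\ve_1$, so the eigenvectors of $A_s$ are precisely $\vx_s = \alpha\ve_1$ with $\alpha \neq 0$. Since $\ve_1$ is already a column of $N$, appending it does not enlarge the span, and hence the matrix $Q = (A_s - \lambda_s I_n\ \ \vx_s)$ appearing in Theorem \ref{t3.4} satisfies ${\rm Span}(Q) = {\rm Span}(N) = \{v : v_n = 0\}$ for every such $\vx_s$.

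Next I would evaluate the two conditions of Theorem \ref{t3.4} against this description. The vector $A_d\vx_s = \alpha A_d\ve_1$ is $\alpha$ times the first column of $A_d$, namely $\alpha(a_{11d}, \ldots, a_{n1d})^\top$, whose last entry is $\alpha a_{n1d}$. Thus condition (\ref{en8}), i.e.\ $A_d\vx_s \in {\rm Span}(Q)$, holds if and only if $a_{n1d} = 0$ (using $\alpha \neq 0$). When $a_{n1d} \neq 0$ this fails for every eigenvector $\alpha\ve_1$ of $A_s$, and since $\lambda_s$ is the only eigenvalue of $A_s$, Theorem \ref{t3.4} forbids any eigenvalue of $A$, giving the first assertion. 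When $a_{n1d} = 0$, condition (\ref{en8}) is satisfied, and I would then examine condition (\ref{en9}): because $\ve_1 \in {\rm Span}(N)$, we have $\vx_s \in {\rm Span}(A_s - \lambda_s I_n)$, so (\ref{en9}) is violated and we land in the ``otherwise'' clause of Theorem \ref{t3.4}, whence $\lambda_d$ may be any complex number. This produces the family of eigenvalues $\lambda_s + \lambda_d\epsilon$ claimed in the second assertion.

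The argument is short precisely because Theorem \ref{t3.4} does the heavy lifting; the only genuine computation is the identification of the column space of the shift $N$. The one point worth stating carefully — and the closest thing to an obstacle — is the observation that $\ve_1$ itself lies in that column space, which is exactly what makes the extra column of $Q$ redundant and guarantees that (\ref{en9}) always fails in this setting, so that the dual part $\lambda_d$ is never pinned down.
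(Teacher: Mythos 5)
Your proof is correct and takes the same route the paper intends: Proposition \ref{prop6.2} is stated there as a direct consequence of Theorem \ref{t3.4}, and your argument supplies exactly the needed computations, namely that the eigenvectors of $J_n(\lambda_s)$ are the nonzero multiples of $\ve_1$, that ${\rm Span}(Q) = {\rm Span}(A_s - \lambda_s I_n) = \{v : v_n = 0\}$ for $n \ge 2$, so that condition (\ref{en8}) reduces to $a_{n1d} = 0$ while condition (\ref{en9}) always fails. No gaps.
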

	
	Examples 1 and 2 confirm the conclusion of this proposition.
	
	\bigskip
	
	\section{Eigenvalues of A Square Dual Complex Matrix}
	
	We first consider the eigenvalues of a dual complex matrix $B = B_s + B_d\epsilon \in {\mathbb {DC}}^{n \times n}$,  whose standard part $B_s$ has only one eigenvalue $\lambda_s$. Then we may assume that $B=Q_sAQ_s^{-1}\in {\mathbb {DC}}^{n \times n}$,  and $A = A_s + A_d\epsilon \in {\mathbb {DC}}^{n \times n}$, where $A_s=\mathrm{diag}(\lambda_s I_{n_0},J_{n_1}(\lambda_{s}),\dots,J_{n_t}(\lambda_{s}))$.%, and the corresponding block $A_{00d}$ is in its Jordan form too.
	Here, $Q_s$ is an invertible $n \times n$ complex matrix.   %The role of $Q_s$ is discussed in the last two sections.

	Then we have the following theorem.
	
	\begin{Thm}\label{t7.1}
		Let $B=Q_sAQ_s^{-1}\in {\mathbb {DC}}^{n \times n}$,  $A = A_s + A_d\epsilon \in {\mathbb {DC}}^{n \times n}$, where $A_s=\mathrm{diag}(\lambda_s I_{n_0},J_{n_1}(\lambda_{s}),\dots,J_{n_t}(\lambda_{s}))$,   $\lambda_{s}$  is a fixed complex number,  $n_0+n_1 + \dots+n_t = n$, $n_i \ge 2$ for $i=1,\dots,t$.   Let   $m_i=n_0+n_1+\dots+n_{i-1}+1$   for $i=1,\dots,{t}$.
		{Denote $Z=[\ve_1,\dots,\ve_{n_0},\ve_{m_1}, \cdots, \ve_{m_t}]\in\mathbb R^{n\times (n_0+t)}$, $W=[\ve_1,\dots,\ve_{n_0},\ve_{m_2-1}, \cdots, \ve_{m_t-1},{\ve_n}]\in\mathbb R^{n\times (n_0+t)}$, and $C=W^\top A_d Z\in\mathbb C^{(n_0 + t) \times (n_0 +t)}$.}
		%For $\vz = (z_1, \cdots, z_n)^\top \in {\mathbb C}^n$, for a subvector $\vy = (z_1, \cdots, z_{n_0}, z_{m_1}, \cdots, z_{m_t})^\top \in {\mathbb C}^{n_0 + t}$.    For  a vector $\vy \in {\mathbb C}^{n_0+t}$, denote $\vz(\vy)$ as the $n$-dimensional complex vector, whose $(1, \cdots, n_0, m_1, \cdots, m_t)$th components are the same as the $(1, 2, \cdots, n_0+t)$th components of $\vy$, and other components as zeros.
		%For the $n \times n$ complex matrix $A_d$, extract its $(1, \cdots, n_0, m_1, \cdots, m_t)$th columns, and its $(1, \cdots, n_0, m_2-1, \cdots, m_t-1, n)$th rows.  Then we have a $(n_0 + t) \times (n_0 +t)$ complex matrix $C$.
		Denote $C$ as a block matrix:
		$$C = \left(\begin{array}{cc}
			C_{00} & C_{01} \\
			C_{10} & C_{11}
		\end{array} \right),$$
		where $C_{00}$ is {an} $n_0 \times n_0$ complex matrix, and $C_{11}$ is a $t \times t$ complex matrix.   Then $C_{00} = A_{00d}$.
		If $B$ has an eigenvalue $\lambda$,   we have $\lambda = \lambda_s + \lambda_d \epsilon$ for a complex number $\lambda_d$. And $B$ has an eigenvalue $\lambda = \lambda_s + \lambda_d \epsilon$, if and only if the following system
		\begin{equation} \label{en28}
			\left(\begin{array}{cc}
				\lambda_dI_{n_0}-C_{00} & -C_{01} \\
				-C_{10} & -C_{11}
			\end{array} \right)\vy = \0,
		\end{equation}
		has a nonzero solution $\vy$, and a complex number solution $\lambda_d$.
		Furthermore, we have $\vx_s=Z\vy$,  and one choice of $\vx_d$ is that {$\vx_d=(A_s - \lambda_s I_n)^\top(\lambda_dI_n -A_d)\vx_s$.}
	\end{Thm}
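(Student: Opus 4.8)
The plan is to push everything through the similarity $B = Q_sAQ_s^{-1}$ down to $A$, and then to solve the standard/dual system (\ref{en2})--(\ref{en3}) explicitly using the block structure of $A_s$. Since $Q_s$ is an invertible complex matrix it is an appreciable invertible dual complex matrix, so $B$ is similar to $A$; by Proposition \ref{p3.2}, $B$ and $A$ share their eigenvalues, with eigenvectors related by $Q_s$, and it suffices to analyze $A$. First I would invoke (\ref{en2}): any eigenvalue $\lambda = \lambda_s' + \lambda_d\epsilon$ of $A$ forces $\lambda_s'$ to be an eigenvalue of $A_s$. But $A_s = \mathrm{diag}(\lambda_s I_{n_0}, J_{n_1}(\lambda_s),\dots,J_{n_t}(\lambda_s))$ has $\lambda_s$ as its only eigenvalue, so $\lambda_s' = \lambda_s$ and $\lambda = \lambda_s + \lambda_d\epsilon$, giving the second assertion.

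Next I would identify the eigenspace of $A_s$. Writing $N := A_s - \lambda_s I_n = \mathrm{diag}(O_{n_0}, J_{n_1}(0),\dots,J_{n_t}(0))$, the kernel of $N$ is spanned by $\ve_1,\dots,\ve_{n_0}$ together with the first coordinate vector $\ve_{m_i}$ of each nilpotent block; these are exactly the columns of $Z$. Thus every eigenvector of $A_s$ has the form $\vx_s = Z\vy$, and since $Z$ has full column rank, $\vx_s \neq \0$ iff $\vy \neq \0$. Rewriting (\ref{en3}) as $N\vx_d = (\lambda_d I_n - A_d)\vx_s$, this is solvable in $\vx_d$ if and only if the right-hand side lies in $\mathrm{range}(N)$. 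The range of $N$ consists of the vectors whose components in positions $1,\dots,n_0$ vanish (from the zero block) and whose last component in each nilpotent block vanishes; these distinguished positions are $1,\dots,n_0$ and $m_2-1,\dots,m_t-1,n$, i.e.\ precisely the columns of $W$. Hence the solvability condition reads $W^\top (\lambda_d I_n - A_d) Z \vy = \0$.

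Finally I would simplify $W^\top(\lambda_d I_n - A_d)Z = \lambda_d\,W^\top Z - W^\top A_d Z = \lambda_d\,W^\top Z - C$. The heart of the computation, and the step I expect to be the most delicate, is evaluating $W^\top Z$: this is pure index bookkeeping, and it crucially uses $n_i \ge 2$. Indeed, because $n_i \ge 2$, the last position $m_{i+1}-1$ (respectively $n$) of block $i$ lies strictly between the consecutive first positions $m_i$ and $m_{i+1}$, so it never coincides with any first position $m_j$; together with $n_0 < m_1$ this forces $W^\top Z = \mathrm{diag}(I_{n_0}, O_t)$. Substituting yields exactly the coefficient matrix of (\ref{en28}), so by the equivalence chain $A$ (hence $B$) has eigenvalue $\lambda_s + \lambda_d\epsilon$ iff (\ref{en28}) has a nonzero $\vy$ for some complex $\lambda_d$, which is the third assertion; reading off the top-left block of $C = W^\top A_d Z$ gives $C_{00} = A_{00d}$. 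For the explicit eigenvector I would check that $NN^\top$ is the orthogonal projection onto $\mathrm{range}(N)$ (on each nilpotent block $JJ^\top = \mathrm{diag}(I_{n_i-1},0)$, and on the zero block it is $O_{n_0}$). Consequently, whenever $(\lambda_d I_n - A_d)\vx_s \in \mathrm{range}(N)$, the vector $\vx_d = N^\top(\lambda_d I_n - A_d)\vx_s = (A_s - \lambda_s I_n)^\top(\lambda_d I_n - A_d)\vx_s$ satisfies $N\vx_d = NN^\top(\lambda_d I_n - A_d)\vx_s = (\lambda_d I_n - A_d)\vx_s$, confirming the stated choice of $\vx_d$.
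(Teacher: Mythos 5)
Your proposal is correct and follows essentially the same route as the paper: reduce to $A$ by similarity, identify the eigenvectors of $A_s$ with the column space of $Z$, and obtain (\ref{en28}) as the solvability condition of (\ref{en3}) by observing that the rows indexed by the columns of $W$ are exactly the non-freely-satisfiable ones. Your explicit computation of $W^\top Z=\mathrm{diag}(I_{n_0},O_t)$ and your verification of the formula for $\vx_d$ via the projection identity $NN^\top=$ (orthogonal projection onto $\mathrm{range}(N)$) supply details the paper leaves implicit, but the argument is the same.
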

	\begin{proof}
		
		The eigenvalues of $B$ are the same as the eigenvalues of $A$, as they are similar. Suppose that $A$ has an eigenvalue $\lambda = \lambda_s + \lambda_d\epsilon$ with an eigenvector $\vx = \vx_s + \vx_d\epsilon$.  By (\ref{en2}), $\lambda_s$ is an eigenvalue of $A_s$ with an eigenvector $\vx_s$.  By the formulation of  $A_s$, $\vx_s$ can be any nonzero vector in the space spanned by {$Z$.}
		%$\ve_1, \cdots, \ve_{n_0}, \ve_{m_1}, \cdots, \ve_{m_t}$.
		Thus, we may extract $\vy \in {\mathbb C}^n$ from $\vx_s$ such that $\vx_s = {Z\vy}$.
		
		Now, consider (\ref{en3}).   We have
		\begin{equation} \label{en29}
			(A_s - \lambda_s I_n)\vx_d = (\lambda_dI_n -A_d)\vx_s.
		\end{equation}
		By the formation of $A_s$, we have $$(A_s - \lambda_s I_n)\vx_d =
		\left(0, \cdots, 0, x_{m_1+1, d}, \cdots, x_{m_2-1, d}, 0, x_{m_2+1, d}, \cdots, x_{nd}, 0\right)^\top.$$    Comparing this with (\ref{en29}), we find that equations $m_1, m_1+1, \cdots, m_2-2, m_2$, $\cdots$, $n-1$ of (\ref{en29}) are always satisfied, as we may let $x_{m_1+1, d}, \cdots, x_{m_2-1, d}$, $x_{m_2+1, d}$, $\cdots$, $x_{nd}$ take values such that these equations are satisfied.  Thus, (\ref{en29}) has a nonzero solution $\vx_s$, which is an eigenvector of $A_s$, if and only if (\ref{en28}) has a nonzero solution $\vy$.  The conclusion follows now.
	\end{proof}
	
	If $t=0$, then (\ref{en28}) indicates that $\lambda_d$ should be an eigenvalue of $C = C_{00} = A_{00d} = A_d$, and $\vy$ is a corresponding eigenvector.
	
	We now consider the general case.

	In general, denote $K_n(\lambda_s;n_0,\cdots,n_t)=\mathrm{diag}(\lambda_s I_{n_0},J_{n_1}(\lambda_{s}),\dots,J_{n_t}(\lambda_{s}))$,   where  $\lambda_{s}$ is a fixed complex number,  $n_0+n_1 + \dots+n_t = n$, $n_j \ge 2$ for $j=1,\dots,t$.
	
	\begin{Thm}\label{t7.2}
		Let $B=Q_sAQ_s^{-1}\in {\mathbb {DC}}^{n \times n}$,  $A = A_s + A_d\epsilon \in {\mathbb {DC}}^{n \times n}$, where
		$$A_s=\mathrm{diag}(K_{n_1}(\lambda_{1s};n_{10},\dots,n_{1t_1}), \cdots,K_{n_p}(\lambda_{ps};n_{p0},\dots,n_{pt_p})),$$
		$\lambda_{1s},\dots,\lambda_{ps}$ are distinct  complex numbers,  $n_1 + \dots+n_p = n$, and $n_0=0$.
		Denote		$E_i$ as the $(n_1+\cdots+n_{i-1}+1,\cdots,n_1+\cdots+n_i)th$ columns of $I_n$,
		$\vx_{is} = E_i^\top\vx_s$, $\vx_{id} = E_i^\top\vx_d$,
		$A_{is}=E_i^\top A_s E_i$, $A_{id}=E_i^\top A_d E_i$, {$A_{kid}=E_k^\top A_d E_i$}.
		Let   $m_{i,j}=n_{i0}+n_{i1}+\dots+n_{i,j-1}+1$   for $j=1,\dots,t_i$, $Z_i=[\ve_1,\dots,\ve_{n_{i0}},\ve_{m_{i1}},$ $\cdots,$ $\ve_{m_{it_i}}]$ $\in\mathbb R^{n_i\times (n_{i0}+t_i)},$ $W_i=[\ve_1,\dots,\ve_{n_0},\ve_{m_{i2}-1}, \cdots, \ve_{m_{it_i}-1},{\ve_{n_i}}]\in\mathbb R^{n_i\times (n_{i0}+t_i)}$, and $C_i$ $=$ $W_i^\top A_{id} Z_i$$\in\mathbb C^{(n_{i0} + t_i) \times (n_{i0} +t_i)}$, {for $i=1,\cdots,p$.}
		Denote $C_i$ as a block matrix:
		$$C_i = \left(\begin{array}{cc}
			C_{i00} & C_{i01} \\
			C_{i10} & C_{i11}
		\end{array} \right),$$
		where $C_{i00}$ is an $n_{i0} \times n_{i0}$ complex matrix, and $C_{i11}$ is a $t_i \times t_i$ complex matrix.
		We have the following conclusions.
		\begin{itemize}
			\item[(i)]  If $B$ has an eigenvalue $\lambda$,    then
			there exists $i\in\{1,\cdots,p\}$ such that $\lambda = \lambda_{is} + \lambda_{d} \epsilon$ for a complex number $\lambda_d$.
			\item[(ii)] Given an index $i$ in $1,\cdots,p$, $B$ has an eigenvalue $\lambda = \lambda_{is} + \lambda_d \epsilon$, if and only if the following system
			\begin{equation} \label{en30}
				\left(\begin{array}{cc}
					\lambda_dI_{n_{i0}}-C_{i00} & -C_{i01} \\
					-C_{i10} & -C_{i11}
				\end{array} \right)\vy = \0,
			\end{equation}
			has a nonzero solution $\vy$, and a complex number solution $\lambda_d$.
			Furthermore, we have $\vx_s=E_i\vx_{is}$, $\vx_{is} = Z_i\vy$,    one choice of $\vx_{id}$ is $\vx_{id}= (A_{is} - \lambda_{is} I_{n_i})^\top(\lambda_dI_{n_i} -A_{id})\vx_{is}$,
			and
			$$\vx_d =  E_i\vx_{id} {- \sum_{k=1, k\neq i}^pE_k(A_{ks} - \lambda_{is} I_{n_k})^{-1}A_{kid}\vx_{is}.}$$
		\end{itemize}
		
	\end{Thm}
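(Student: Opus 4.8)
The plan is to reduce everything to the single-eigenvalue case already settled in Theorem \ref{t7.1}, exploiting the block structure of $A_s$. Since $B = Q_sAQ_s^{-1}$, Proposition \ref{p3.2} guarantees that $B$ and $A$ share the same eigenvalue set, so I would work with $A$ throughout. Suppose $\lambda = \lambda_s + \lambda_d\epsilon$ is an eigenvalue of $A$ with eigenvector $\vx = \vx_s + \vx_d\epsilon$. By (\ref{en2}), $\lambda_s$ must be an eigenvalue of $A_s$; but $A_s = \mathrm{diag}(K_{n_1}(\lambda_{1s};\cdots),\dots,K_{n_p}(\lambda_{ps};\cdots))$ has spectrum exactly $\{\lambda_{1s},\dots,\lambda_{ps}\}$, so $\lambda_s = \lambda_{is}$ for some $i$. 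This yields conclusion (i) immediately.

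For conclusion (ii) I would fix the index $i$ with $\lambda_s = \lambda_{is}$. The crucial structural fact is that the $\lambda_{is}$-eigenspace of the block-diagonal matrix $A_s$ lies entirely inside the $i$-th coordinate block, since the remaining diagonal blocks carry the distinct eigenvalues $\lambda_{ks}$, $k \neq i$. Hence $\vx_s = E_i\vx_{is}$ where $\vx_{is}$ is a $\lambda_{is}$-eigenvector of the block $A_{is} = K_{n_i}(\lambda_{is};n_{i0},\dots,n_{it_i})$; exactly as in Theorem \ref{t7.1}, that eigenspace is spanned by $Z_i$, so $\vx_{is} = Z_i\vy$. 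Writing $\vx_d = \sum_{k=1}^p E_k\vx_{kd}$ and applying $E_k^\top$ to the dual-part equation (\ref{en3}), in the form $(A_s - \lambda_{is}I_n)\vx_d = (\lambda_d I_n - A_d)\vx_s$, decouples it into $p$ independent block equations, using $E_k^\top A_s E_j = \delta_{kj}A_{js}$ and $E_k^\top E_i = O$ for $k\neq i$.

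The two kinds of block equations are then handled separately. For $k \neq i$, the diagonal block $A_{ks} - \lambda_{is}I_{n_k}$ is invertible precisely because $\lambda_{ks} \neq \lambda_{is}$, and since $E_k^\top E_i = O$ the right-hand side collapses to $-A_{kid}\vx_{is}$; this uniquely determines $\vx_{kd} = -(A_{ks}-\lambda_{is}I_{n_k})^{-1}A_{kid}\vx_{is}$, which is exactly the off-diagonal sum in the stated formula for $\vx_d$. For $k = i$, using $E_i^\top E_i = I_{n_i}$ the block equation becomes $(A_{is}-\lambda_{is}I_{n_i})\vx_{id} = (\lambda_d I_{n_i} - A_{id})\vx_{is}$, which is literally equation (\ref{en29}) of Theorem \ref{t7.1} applied to the single-eigenvalue matrix $A_{is}$. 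Invoking Theorem \ref{t7.1} for this block then delivers both the solvability criterion --- that (\ref{en30}) possesses a nonzero $\vy$ and a complex $\lambda_d$ --- and the displayed choice $\vx_{id} = (A_{is}-\lambda_{is}I_{n_i})^\top(\lambda_d I_{n_i}-A_{id})\vx_{is}$.

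The step I expect to demand the most care is the decoupling argument itself: one must verify rigorously that the standard eigenvector is confined to the $i$-th block, and that applying $E_k^\top$ genuinely splits (\ref{en3}) into the stated $p$ equations with the correct cross-block coupling terms $A_{kid}$. This is exactly where the hypothesis that $\lambda_{1s},\dots,\lambda_{ps}$ are distinct is indispensable, since it guarantees the invertibility of $A_{ks}-\lambda_{is}I_{n_k}$ for every $k \neq i$ and thereby both eliminates the off-diagonal unknowns uniquely and isolates the $i$-th block. Once the splitting is established, the $i$-th block is an exact instance of the setting of Theorem \ref{t7.1}, so no computation beyond that theorem is required, and the remaining blocks contribute only the explicit correction term appearing in $\vx_d$.
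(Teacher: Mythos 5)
Your proposal is correct and follows essentially the same route as the paper's proof: confine $\vx_s$ to the $i$-th block, split the dual-part equation $(A_s-\lambda_{is}I_n)\vx_d=(\lambda_dI_n-A_d)\vx_s$ into $p$ block equations via $E_k^\top$ and $\vx_d=\sum_k E_k\vx_{kd}$, invert $A_{ks}-\lambda_{is}I_{n_k}$ for $k\neq i$, and invoke Theorem \ref{t7.1} for the $k=i$ block. No substantive difference from the paper's argument.
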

	
	{\begin{proof}
			Suppose $\lambda_s=\lambda_{is}$ for a given index $i$ in $1,\cdots,{p}$.
			{By the block diagonal structure of $A_s$, we have $\vx_s=E_i\vx_{is}$.}
			Equation \eqref{en29} can be divided into $p$  systems of equations:
			\begin{equation} \label{en31}
				E_k^\top(A_s - \lambda_{is} I_n)\vx_d = E_k^\top(\lambda_dI_n -A_d)E_i\vx_{is},
			\end{equation}
			for $k=1,\cdots,p$.  By rewriting $\vx_d$ as $\vx_d=\sum_{l=1}^nE_l\vx_{ld}$, the left hand side of \eqref{en31} satisfies
			$$E_k^\top(A_s - \lambda_{is} I_n)\vx_d=\sum_{l=1}^nE_k^\top(A_s - \lambda_{is} I_n)E_l\vx_{ld}=(A_{ks} - \lambda_{is} I_{n_k})\vx_{kd},$$
			where   the last equality follows from $E_k^\top(A_s - \lambda_s I_n)E_l=O_{n_k\times n_l}$ for all $k\neq l$.
			For the left hand side of \eqref{en31}, we have
			$$E_k^\top(\lambda_dI_n -A_d)E_i\vx_{is}=(\lambda_d \delta_{ki}I_{n_i} -A_{kid})\vx_{is}.$$
			Here, $\delta_{ki}=1$ if $k=i$ and  $\delta_{ki}=0$ otherwise. Equation \eqref{en31} can be reduced to
			\begin{equation}\label{en32}
				(A_{ks} - \lambda_{is} I_{n_k})\vx_{kd} = (\lambda_d \delta_{ki}I_{n_i} -A_{kid})\vx_{is}.
			\end{equation}
			
			If $k=i$, equation \eqref{en32} is equivalent to
			$(A_{is} - \lambda_{is} I_{n_i})\vx_{id} = (\lambda_dI_{n_i} -A_{id})\vx_{is}$. Hence, the formulations of $\vx_{is}$ and $\vx_{id}$ follow  directly from Theorem~\ref{t7.1}.
			
			If $k\in \{1,\cdots,p\} \setminus \{i\}$, equation \eqref{en32} is equivalent to
			$$ (A_{ks} - \lambda_{is} I_{n_k})\vx_{kd} = -A_{kid}\vx_{is}.$$
			Since $\lambda_{ks}\neq \lambda_{is}$, the coefficient matrix on the left hand side is nonsingular. Hence, we have
			$$\vx_{kd} = -(A_{ks} - \lambda_{is} I_{n_k})^{-1}A_{kid}\vx_{is}.$$
			
			This completes the proof.
		\end{proof}
		
	}

	\bigskip
	\section{Further Discussion}
	
	In this paper, we explored the eigenvalue theory of dual complex matrices.   Unlike complex matrices, a square dual complex matrix may have no eigenvalues at all, or have infinitely many eigenvalues.
	By exploring the Jordan form theory of some special dual complex matrices in Sections 5 and 6.  In Section 7, we give a description of the eigenvalues of a general square dual complex matrix.  Our work is only a starting point.  We may further explore on these.  Is there a Jordan form for a general square dual complex matrix? We may also explore the determinant and characteristic polynomial theory of dual complex matrices.
	Then we may establish a sound dual complex matrix theory, like {and different from} the cases of complex matrices \cite{HJ12} and quaternion matrices \cite{Ji19, WLZZ18, Zh97}.

	We also need to find more applications of dual complex matrices.  As in \cite{WDW23}, the dual part of a dual complex matrix can be viewed as a perturbation, we may also explore possible applications of dual complex matrices in ocean science and imaging science.
	
	As computational methods of computing eigenvalues of a dual quaternion Hermitian matrix have already been investigated in \cite{CQ23}, computational methods for computing eigenvalues of a dual complex diagonalizable matrix are surely feasible.

	\bigskip
	
	%{\bf Acknowledgment}  We are thankful to Professors Shaoqiang Deng and Honglian Zhang for discussion on Lie algebra, {and Professor Yuanhua Ni and his students for discussion on kinematics control}.   In particular, we are grateful to Professor Chengming Bai and his Ph.D. student Yuanchang Lin, whose note enabled the Lie algebra identification in Section 3.
	
	%and Zhongming Chen for the discussion on standard dual quaternion optimization, to Wei Li for the discussion on hand-eye calibration, to Jiantong Cheng for the discussion on SLAM, to Guyan Ni for introducing Jiantong Cheng to me, and to Chen Ouyang and Jinjie Liu for Figures 1 and 2.   I would like to thank two anonymous referees who carefully read my manuscript and gave very helpful comments.

	%\section*{Compliance with ethical standards}
	\bigskip
	
	%{\bf Conflicts of Interest} The author declares no conflict of interest.

	% \vspace{100pt}

\end{document}